\documentclass{elsarticle}

\usepackage{amsmath}
\usepackage{amsfonts}

\newtheorem{proposition}{Proposition}
\newtheorem{corollary}{Corollary}
\newtheorem{example}{Example}

\newtheorem{lemma}{Lemma}
\newtheorem{theorem}{Theorem}
\newproof{proof}{Proof}
\newtheorem{definition}{Definition}

\newcommand{\RR}{\mathbb R}
\newcommand{\PP}{\mathbb P}

\newcommand{\mcV}{\mathcal{V}}

\newcommand{\ybdf}{y_k^\mathrm{BDF}}

\newcommand{\rank}{\mathop{\text{rank}}\nolimits}
\newcommand{\argmin}{\mathop{\text{argmin}}}
\newcommand{\diag}{\mathop{\text{diag}}\nolimits}
\newcommand{\range}{\mathop{\text{range}}\nolimits}

\begin{document}

\begin{frontmatter}

    \title{Minimal residual multistep methods for large stiff non-autonomous linear problems}

    \author[bsu]{B.V.~Faleichik\corref{cor1}}
    \ead{faleichik@bsu.by}


    \address[bsu]{Department of Computational Mathematics, Belarusian State University, 4,~Nezavisimosti~avenue, 220030, Minsk, Belarus}

    \cortext[cor1]{Corresponding author}

    \begin{keyword}
    Ordinary differential equations \sep
    Stiff systems \sep
    Linear multistep methods \sep
    BDF methods \sep
    Least squares \sep

     \MSC 65L04 \sep 65L05 \sep 65L06
    \end{keyword}

%

    \begin{abstract}
    The purpose of this work is to introduce a new idea of how to avoid the factorization of large matrices during the solution of stiff systems of ODEs. Starting from the general form of an explicit linear multistep method we suggest to adaptively choose its coefficients on each integration step in order to minimize the norm of the residual of an implicit BDF formula. Thereby we reduce the number of unknowns on each step from $n$ to $O(1)$, where $n$ is the dimension of the ODE system. We call this type of methods Minimal Residual Multistep (MRMS) methods. 
    In the case of linear non-autonomous problem, besides the evaluations of the right-hand side of ODE, the resulting numerical scheme additionally requires one solution of a linear least-squares problem with a thin matrix per step. We show that the order of the method and its zero-stability properties coincide with those of the used underlying BDF formula. 

    For the simplest analog of the implicit Euler method the properties of linear stability are investigated. Though the classical absolute stability analysis is not fully relevant to the MRMS methods, it is shown that this one-step method is applicable in stiff case. In the numerical experiment section we consider the fixed-step integration of a two-dimensional non-autonomous heat equation using the MRMS methods and their classical BDF counterparts. The starting values are taken from a preset slowly-varying exact solution. The comparison showed that both methods give similar numerical solutions, but in the case of large systems the MRMS methods are faster, and their advantage considerably increases with the growth of dimension. Python code with the experimantal code can be downloaded from the GitHub repository \textsf{https://github.com/bfaleichik/mrms}.
\end{abstract}

\end{frontmatter}

%
%
%
%
%
%
%

\section*{Abbreviations}

ODE --- ordinary differential equation

MRMS method --- minimal residual multistep method

\section*{Introduction}

The necessity of solving large systems of linear equations during the nummerical solution of stiff initial value problems of high dimensions is a well-known problem of implicit numerical methods. Currently there are several approaches which deal with this problem. `Purely explicit' methods for stiff problems employ only the evaluations of the right-hand side of an ODE, for example the well-known explicit Runge-Kutta methods of Chebyshev type \cite{VDH}, \cite{Lebedev}. There are also methods which implement implicit methods using special matrix-free iterative processes instead of conventional Newton iteration \cite{gpi}. The second kind are the methods which admit a certain level of `implicitness', i.~e. some linear systems (possibly of low dimensions) need to be solved. The most remarkable method of this kind is the so-called Newton-Krylov method for solution of nonlinear systems proposed by Brown and Hindmarsh \cite{Brown}. 

The idea which is developed in the present work lies in between the above mentioned tactics. In a certain sense we borrowed 
 the idea to restrict the set of possible solutions by a subspace of low dimension from \cite{Brown}. This allowed us to reduce the number of unknowns on each step from $n$ to $O(1)$, where $n$ is the dimension of the ODE system.
On the other hand there is some analogy to Rosenbrock methods \cite{Rosenbrock} which do not employ any iterative processes. In contrast to these methods, instead of one linear system per step we suggest to solve a linear least-squares problem with a thin matrix.

The paper is organized as follows. In section 
\ref{sec:spec} we define the general form of the method. In sections
\ref{sec:order} and \ref{sec:zero} the properties of accuracy and zero-stability are discussed. Section \ref{sec:linear-stab} is devoted to the investigation of linear stability properties of the MRMS methods. The main focus here is on the simplest method of the MRMS family --- the one-step minimal residual counterpart of the implicit Euler method. For the general case we perform a numerical investigation of linear stability in subsection \ref{ss:gen-lin-stab}. In section \ref{sec:implementation} we discuss the details of implementation of the methods for linear systems, and section \ref{sec:experiment} contains the results of the numerical experiment with a 2D heat equation which demonstrate considerable advantage of the MRMS methods over their BDF prototypes.

\section{Specification of the method}
\label{sec:spec}

Consider a system of ordinary differential equations
\begin{equation}\label{eq:ivp}
  y'=f(t,y),\quad y(t_0) = y_0\in\RR^n,
\end{equation}
and a $k$-step explicit linear multistep method of general form 
\begin{equation}\label{eq:yk}
y_k=\sum_{j=0}^{k-1} (\tau \beta_j f_j-\alpha_j y_j)
\end{equation}
with starting points 
$(t_j,y_j)$, $t_j<t_{j+1}$, $t_k=t_{k-1}+\tau$, $f_j=f(t_j,y_j)$. In contrast to the classical case the coefficients $\alpha=[\alpha_0,\ldots,\alpha_{k-1}]^T, \beta=[\beta_0,\ldots,\beta_{k-1}]^T$ on each step are not supposed to be determined explicitly by order conditions, but rather are subjected to some optimization constraints. More precisely, we would like to choose such coefficients that give a best approximation to $y(t_k)$ in the following sense. Consider a standard backward approximation of order $p$ to derivative
\begin{equation}
  \tau y'(t_k)\approx c_k y(t_k) + c_{k-1} y(t_{k-1})+ \ldots + c_{k-p} y(t_{k-p}),\quad p\leq k, 
\end{equation}   
a corresponding standard $p$-step BDF formula \cite{Curtiss}
\begin{equation}\label{eq:bdf}
    c_k y_k + c_{k-1} y_{k-1}\ldots + c_{k-p} y_{k-p} = \tau f_k,
\end{equation}   
and a residual function $r:\RR^n\to\RR^n$,
\begin{equation}\label{eq:r}
    r(x) = \tau f(t_k,x)-(c_k x + c_{k-1} y_{k-1}+\ldots + c_{k-p} y_{k-p}).
\end{equation}   
Then
\begin{equation}
    (\alpha,\beta) = \argmin_{\alpha',\beta'}\left\|r\Bigl(\sum_{j=0}^{k-1} (\tau \beta'_j f_j-\alpha'_j y_j)\Bigr)\right\|.
\end{equation}
Generally any norm in $\RR^n$ can be used, but for the practical use the 2-norm is more preferable.
For obvious reasons sometimes we will call \eqref{eq:bdf} an \emph{underlying BDF method}.
By $\ybdf$ we will denote the exact result of this method, i.~e. a vector for which $r(\ybdf)=0$ must hold. Generally speaking, $\ybdf$ does not necessarily need to be unique and may not exist at all (see e.~g. Example \ref{ex:ie-fail} below). 

If we rewrite \eqref{eq:yk} in the form
\begin{equation}
    y_k=V \gamma,   
\end{equation}
where $\gamma=\begin{bmatrix}\alpha\\ \beta\end{bmatrix}\in \RR^{2k}$, $V:\RR^{2k}\to \RR^n$,
\begin{equation}\label{eq:Vk}
    V=\Bigl[-y_0\Bigl|-y_1\Bigl|\ldots\Bigr|-y_{k-1}\Bigl| 
    \tau f_0\Bigl|\tau f_1\Bigl|\ldots\Bigr|\tau f_{k-1}\Bigr],      
\end{equation}
then an alternative point of view on the proposed method is seen. We are looking for an approximate solution of BDF equation \eqref{eq:bdf} in the subspace $\mcV\subset \RR^n$ spanned by vectors $y_0,\ldots y_{k-1}, \tau f_0,\ldots \tau f_{k-1}$:
\begin{equation}\label{eq:rmin}
    y_k=\argmin_{x\in \mcV}\|r(x)\|,\quad \mcV=\range V.     
\end{equation}
There is a simple but important observation: if $\ybdf\in \mcV$ and is unique, then $y_k=\ybdf$. In particular this may be the case when $\rank V=n$ and it easier to directly solve ${r(y_k)=0}$ than the optimization problem \eqref{eq:rmin}. Therefore the most interesting and important case is $2k\ll n$, so intrinsically MRMS methods are reasonable to apply only to big ODE systems.  Before proceeding to the main properties of the method we pin down the definition.
\begin{definition}
Having $k$ starting points $(t_j,y_j)$, $j=0,1,\ldots, k-1$, and an underlying $p$-step BDF formula \eqref{eq:bdf}, a minimal residual multistep method MRMS($k$,$p$) is defined by \eqref{eq:rmin}, where the residual function $r$ and the matrix $V$ are defined correspondingly by \eqref{eq:r} and \eqref{eq:Vk}.
\end{definition}

\section{Order of the method}
\label{sec:order}

Consider the fixed step case: $t_j=j \tau$.
\begin{theorem}\label{thm:mrms-order}
The order of a MRMS($k$, $p$) method is $\min\{2k-1, p\}$.
\end{theorem}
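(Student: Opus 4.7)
My approach is to exploit the minimality property \eqref{eq:rmin} by exhibiting an element of $\mcV$ whose residual is very small, and then to convert that residual bound into an error bound on $y_k$ itself. Following the standard convention for the order of a multistep method, I would take the starting values to be exact, $y_j = y(t_j)$ for $j = 0,\ldots,k-1$, and aim to show $y_k - y(t_k) = O(\tau^{q+1})$ with $q = \min\{2k-1,p\}$.

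The first step is a classical fact from the theory of linear multistep methods: an explicit $k$-step formula of the form \eqref{eq:yk} has $2k$ free coefficients, so by matching the first $2k$ terms of the Taylor expansion of $y(t_k) - \sum_{j=0}^{k-1}(\tau \beta_j f_j - \alpha_j y_j)$ one obtains a choice $(\alpha^\star, \beta^\star)$ whose local truncation error is $O(\tau^{2k})$, i.e.\ of order $2k-1$. Setting $\gamma^\star = [\alpha^\star;\beta^\star]^T$ and $y_k^\star = V \gamma^\star$, this produces a vector $y_k^\star \in \mcV$ with $y_k^\star - y(t_k) = O(\tau^{2k})$.

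Next I would bound $\|r(y_k^\star)\|$ via the split $r(y_k^\star) = r(y(t_k)) + [r(y_k^\star) - r(y(t_k))]$. The first term is the truncation error of the underlying order-$p$ BDF formula along the exact solution, hence $O(\tau^{p+1})$. The second term, by smoothness of $f$, equals $(\tau f_y(t_k,\xi) - c_k I)(y_k^\star - y(t_k))$ for some intermediate $\xi$ and is therefore $O(\tau^{2k})$. Combining, $\|r(y_k^\star)\| = O(\tau^{\min\{p+1,2k\}})$, and the minimality \eqref{eq:rmin} immediately transfers this bound to $\|r(y_k)\|$.

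The last and most delicate step is to invert this residual estimate into an error estimate on $y_k$. Here the derivative $r'(x) = \tau f_y(t_k,x) - c_k I$ is uniformly invertible on a neighborhood of $y(t_k)$ for all sufficiently small $\tau$, since $c_k \neq 0$ in every BDF formula and $\tau f_y \to 0$ as $\tau \to 0$ under the fixed-step smoothness assumption. A mean value argument then yields $\|y_k - y(t_k)\| \leq \|r'(\xi)^{-1}\| \cdot \|r(y_k) - r(y(t_k))\| = O(\tau^{\min\{p+1,2k\}})$, which is order $\min\{2k-1,p\}$. I expect the main obstacle to lie precisely in justifying this local inversion: one must first verify that the minimizer $y_k$ itself lies in the neighborhood where $r$ is well-conditioned. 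This can be arranged by a short bootstrap using that $\mcV$ already contains the $O(\tau^{2k})$-accurate point $y_k^\star$, so $y_k$ cannot be much farther from $y(t_k)$ than $y_k^\star$ is.
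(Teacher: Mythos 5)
Your proposal follows essentially the same route as the paper's own proof: exhibit the order-$(2k-1)$ explicit multistep point in $\mcV$ to bound $\|r(y_k)\|$ by $O(\tau^{p+1}+\tau^{2k})$ via minimality, then invert the residual bound through a mean-value argument on $r$. The only difference is that you spell out the invertibility of $r'(x)=\tau f_y-c_kI$ and the localization of the minimizer, details the paper leaves implicit; this is a welcome refinement, not a different approach.
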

\begin{proof}
    The basic fact for the proof is that there always exists an explicit $k$-step linear multistep method of order $2k-1$. Denote the corresponding vector of coefficients by $\hat\gamma=[\hat \alpha,\hat \beta]^T$. Then for $\hat y_k=V \hat \gamma_k$ we have
    \[
        \hat y_k - y(t_k) = O(\tau^{2k}).
    \]
    By definition of $y_k$ for the MRMS method 
    \[
        \|r(y_k)\| \leq \|r(\hat y_k)\| = \|r(y(t_k)+O(t^{2k}))\| \leq O(\tau^{p+1}+\tau^{2k}). 
    \]
    Using mean-value theorem for vector-valued functions \cite{Ortega} it can be shown that for sufficiently smooth $f$ there exists $C<\infty$ such that
    \[
        \|y_k-y(t_k)\|\leq C \|r(y_k) - r(y(t_k))\|\leq O(\tau^{p+1}+\tau^{2k}).  
    \]
   \qed
 \end{proof}

\begin{example}\label{ex:order}
    To support this result we made a computational experiment on the linear model equation
     \begin{equation}\label{eq:linear-order-test}
         y_i'=\lambda_i y_i+1,\quad y_i(0)=1,
     \end{equation}
    where $\lambda_i$ are equally spaced on $[-\lambda_{\max},0]$, $n=100$, $t\in[0,1]$. We take $\lambda_{\max}=100$ and considered MRMS($k$, $p$) methods for $p=1,\ldots 7$ and the implicit Euler method. The standard convergence diagrams are shown on Figure \ref{fig:order-1}: the absolute error in endpoint $err$ is measured in the maximum-norm, the number of equal steps $N_{steps}$ is changing from $2^4$ to $2^{13}$ . In order to show the importance of parameter $k$ two diagrams are generated, for $k=p$ and $k=p+1$. The starting values for $k>1$ are taken from the exact solution.
    \begin{figure}
        \includegraphics[width=13cm]{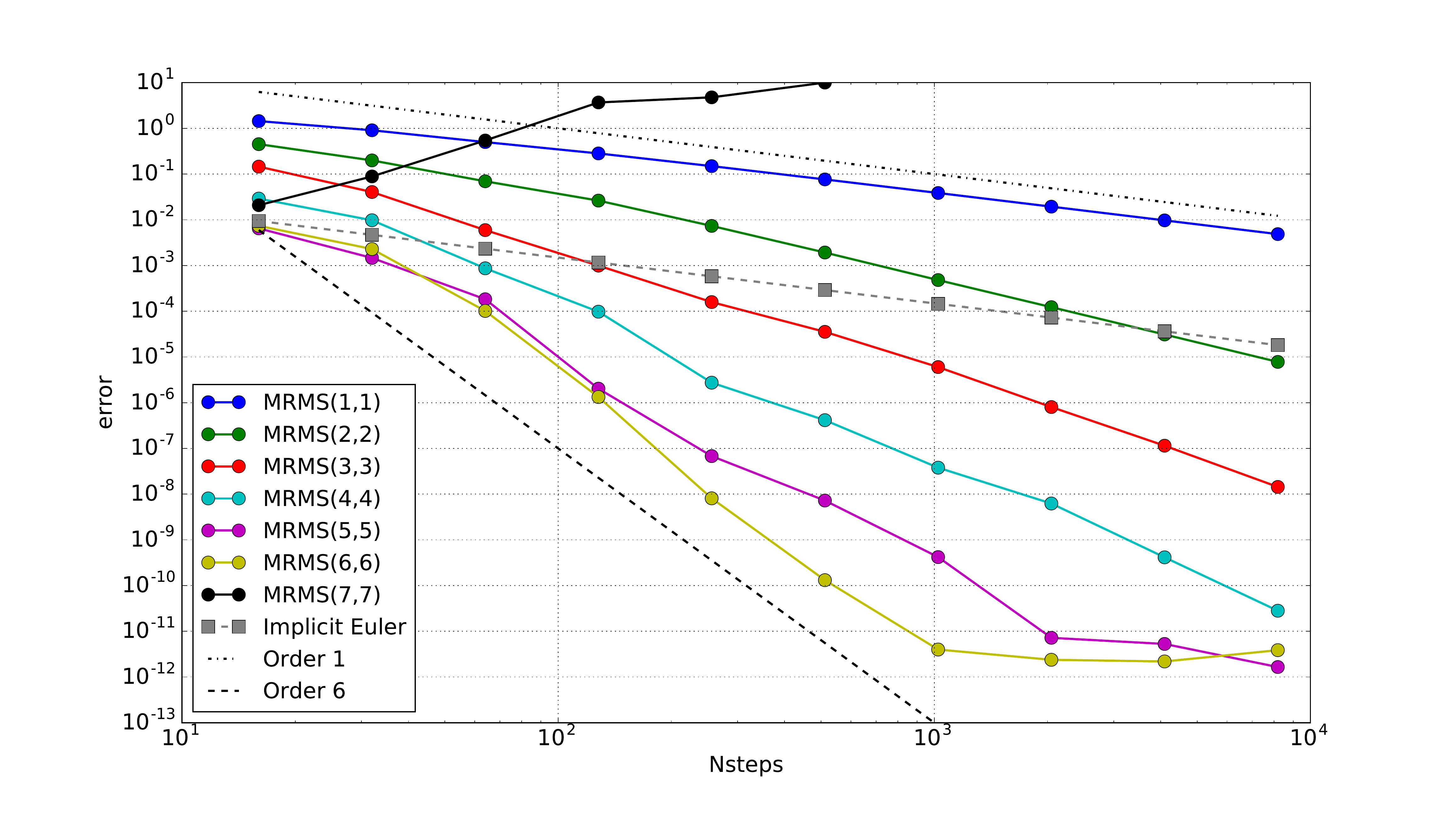}\\
        \includegraphics[width=13cm]{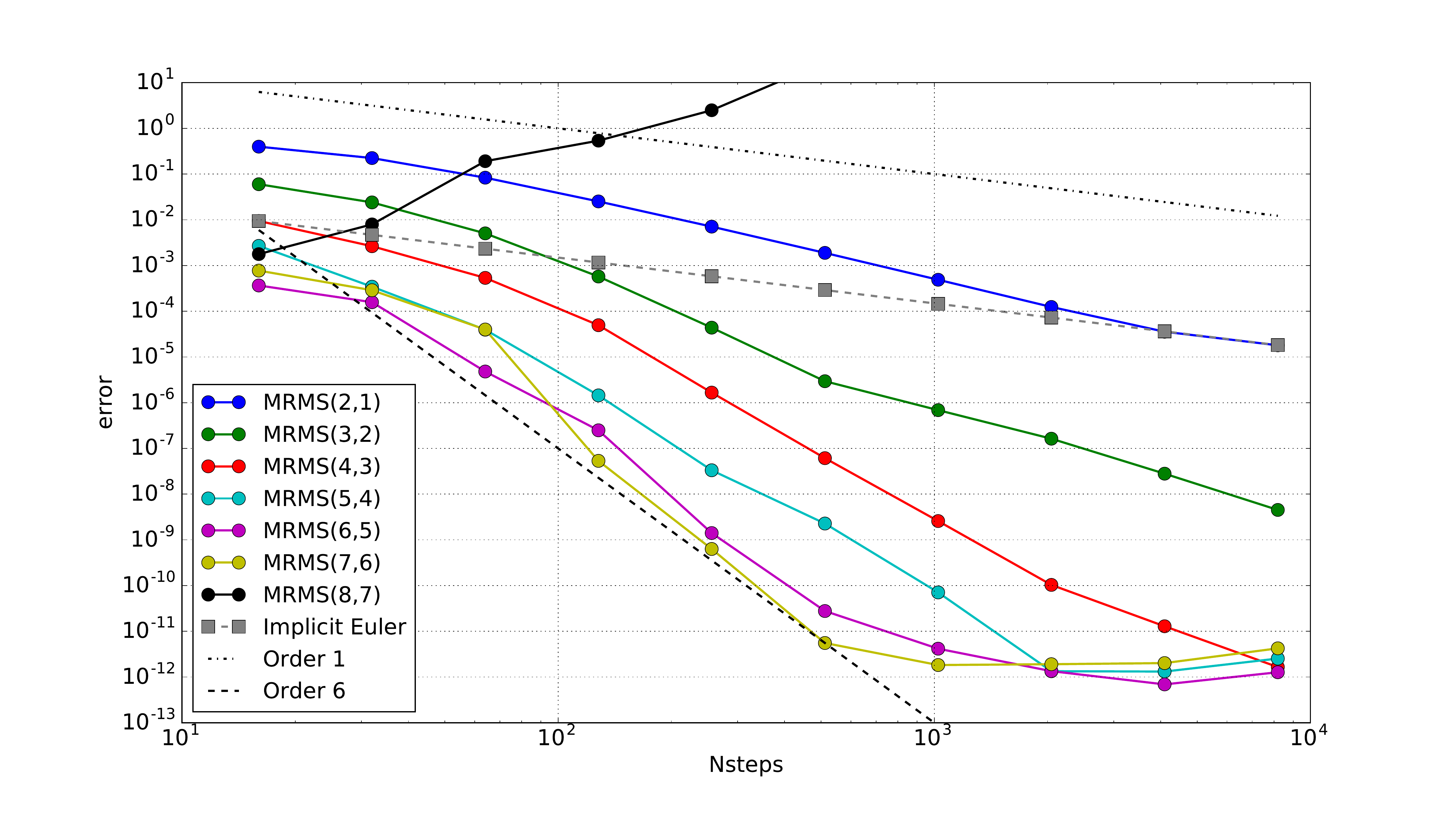}\\
        \caption{Convergence diagram from example \ref{ex:order}.}
    \end{figure}
    \label{fig:order-1}
\end{example}

\section{Zero-stability}
\label{sec:zero}

\begin{lemma}
    Let $t_j=j\tau$. If $p\leq k$ then a MRMS($k$, $p$) method inherits zero-stability from the underlying BDF method. 
\end{lemma}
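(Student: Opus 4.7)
The plan is to show that when the method is applied to the standard zero-stability test problem $y' = 0$, the MRMS($k$,$p$) recursion collapses to exactly the underlying BDF recursion, whereupon zero-stability is immediately inherited from the BDF formula.

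First I would substitute $f \equiv 0$ into the definitions of section \ref{sec:spec}. All columns $\tau f_j$ of $V$ vanish, so $\mcV = \range V = \text{span}\{y_0,\ldots,y_{k-1}\}$, and the residual function \eqref{eq:r} reduces to the affine map $r(x) = -(c_k x + c_{k-1} y_{k-1} + \ldots + c_{k-p} y_{k-p})$. Since $c_k \neq 0$, this can be rewritten as $r(x) = -c_k (x - \ybdf)$, where $\ybdf = -\sum_{j=1}^{p}(c_{k-j}/c_k)\, y_{k-j}$ is the unique solution of the BDF equation \eqref{eq:bdf} with $f_k = 0$. Hence, for any norm, $\|r(x)\| = |c_k|\,\|x - \ybdf\|$.

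Next I would use the hypothesis $p \leq k$. It guarantees that the vectors $y_{k-p},\ldots,y_{k-1}$ appearing in the expression for $\ybdf$ are all among the spanning vectors of $\mcV$. Therefore $\ybdf \in \mcV$, and the minimum of $\|x - \ybdf\|$ over $x \in \mcV$ is attained at $x = \ybdf$ with value zero. By \eqref{eq:rmin} this forces $y_k = \ybdf$; the minimizer in the output is unique even though the coefficient vector $\gamma$ need not be, which is all that matters for stability analysis.

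Thus in the zero-stability test the MRMS step produces exactly the same $y_k$ as the underlying BDF formula, and the two recursions coincide for all subsequent steps by induction. Zero-stability of the BDF formula (under the standard condition $p \leq 6$ where applicable, or in whatever sense it holds) then transfers verbatim to the MRMS($k$,$p$) method. I do not expect any serious obstacle; the only delicate point is to make explicit that non-uniqueness of $\gamma$ does not affect the output $y_k$, which follows from the injectivity of the affine map $x \mapsto r(x)$ established in the first step.
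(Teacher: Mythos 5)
Your proof is correct and follows essentially the same route as the paper: with $f\equiv 0$ the residual becomes the injective affine map $r(x)=-c_k(x-\ybdf)$, the hypothesis $p\leq k$ puts $\ybdf$ in $\mcV$, and the unique minimizer is $y_k=\ybdf$, so the MRMS recursion reproduces the BDF one. The only cosmetic difference is that the paper packages this conclusion by writing out the generating polynomial $\rho(z)=c_k^{-1}z^{k-p}\rho^{\mathrm{BDF}}(z)$ (the extra roots at zero being harmless), whereas you argue directly that the two sequences coincide; both are valid formulations of inherited zero-stability.
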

\begin{proof}
    Since we are considering $f(t,y)=0$, the MRMS solution is 
    \[
    y_k=-(\alpha_0 y_0+\ldots+\alpha_{k-1} y_{k-1}),
    \]
    where $\alpha_0, \ldots, \alpha_{k-1}$ minimize the norm of
    \[
        -r(y_k)=c_k \sum_{j=0}^{k-1}(-\alpha_j y_j) + c_{k-1}y_{k-1}+ \ldots + c_{k-p}y_{k-p} = \sum_{j=0}^{k-1}(c_j-c_k \alpha_j) y_j,
    \]
    where we put $c_0=\ldots=c_{k-p-1}=0$. It is clear that optimal $r(y_k)$ is zero and there can be infinitely many vectors $\alpha$ for which this minimal value is reached. But for all these $\alpha$ we have
    \[
        y_k=-\sum_{j=0}^{k-1}\alpha_j y_j = -c_k^{-1}\sum_{j=0}^{k-1}c_j y_j,
    \]
    so the corresponding generating polynomial $\rho$ is 
    \[
        \rho(z)=\sum_{j=0}^{k-1}\alpha_j z^j + z^k = c_k^{-1}\sum_{j=0}^{k-1}c_j z^j + z^k = c_k^{-1} z^{k-p}\rho^{\mathrm{BDF}}(z),
    \]
    where $\rho^{\mathrm{BDF}}$ is the generating polynomial of degree $p$ of the underlying BDF method \eqref{eq:bdf}.\qed
\end{proof}


\section{Linear stability}
\label{sec:linear-stab}

The analysis of linear stability we are going to perform on the simplest case of $k=1$, $p=1$.

\subsection{Minimal residual Euler method}

The simplest method of MRMS family is MRMS(1,1) --- the analog of the Euler method:
\begin{equation}\label{eq:mrEuler}
    y_1=\alpha y_0 + \tau \beta f_0. 
\end{equation}
Note that here for clarity we put $\alpha=-\alpha_0$, $\beta=\beta_0$, which is not perfectly consistent with the previously introduced general notation. On each step these two coefficients are forced to minimize the norm of implicit Euler's residual:
\begin{equation}\label{eq:mre-optimize}
  (\alpha,\beta)=\argmin_{\alpha',\beta'}\|r_1(\alpha' y_0+\tau \beta' f_0)\|,  
\end{equation}
where
\begin{equation}
    r_1(x)=y_0+\tau f(t_0+\tau,x) - x.
\end{equation} Hereinafter we will call this method the minimal residual Euler (MRE) method. By theorem \ref{thm:mrms-order} the order of the method is equal to one. 

 

It is straightforward to check that for the standard linear test equation ${y'=\lambda y}$ the MRE method is equivalent to the implicit Euler method\footnote{Since we have two free parameters, in this case optimization problem \eqref{eq:mre-optimize} generally has infinitely many solutions. But all of them yield a solution $y_1$ which coincides with that of implicit Euler's method.}. Therefore to analyze linear stability of the method we must consider a multidimensional analogue:
\begin{equation}\label{eq:model-lin-sys}
\begin{array}{cc}
    y'=\Lambda y,\quad y(0)=y_0,\\
    \Lambda=\diag\{\lambda_1,\ldots,\lambda_n\},\quad
    y_0=(\eta_1,\ldots,\eta_n)^T.
\end{array}
\end{equation}

By definition of the MRE method in this case we have
\begin{equation}\label{eq:mre-Rz}
    y_1=R(\tau \Lambda) y_0,\quad R(z)=\alpha+\beta z,
\end{equation}
and
\begin{equation}
    r_1(y_1) = y_0+\tau \Lambda y_1 - y_1 = P(\tau \Lambda) y_0,
\end{equation}
\begin{equation}\label{eq:mre-p}
    P(z) = 1+(z-1)R(z).
\end{equation}
Optimization problem \eqref{eq:mre-optimize} now takes the form
\begin{equation}\label{eq:mre-ls}
    (\alpha,\beta)^T = \argmin_{\gamma\in\RR^2}\Bigl\|
    W \gamma + y_0
    \Bigr\|, 
\end{equation}
where
\begin{equation}
    W = \begin{bmatrix}
    \eta_1(z_1-1) & \eta_2(z_2-1) & \ldots & \eta_n(z_n-1)\\
    \eta_1 z_1(z_1-1) & \eta_2 z_2(z_2-1) & \ldots & \eta_n z_n(z_n-1)\\
    \end{bmatrix}^T,  
\end{equation}
$z_i=\tau \lambda_i$ and by $\eta_i$ we denote the components of $y_0$. In terms of polynomial \eqref{eq:mre-p} problem \eqref{eq:mre-ls} can be reformulated in the following way: find polynomial $P\in\PP^1$ such that
\begin{equation}\label{eq:mre-P-opt}
    \|P(\tau \Lambda)y_0\|=\min_{p\in\PP^1}\|p(\tau \Lambda)y_0\|.
\end{equation}
Here $\PP^1$ is the set of all polynomials $p$ of degree 1 such that $p(1)=1$. 

It should be noted that the MRE method is nonlinear: even on linear system the evolution operator $R(\tau A)$ depends on the initial condition $y_0$. Hence it is difficult to directly generalize the classical notion of stability region to the MRE case. The same is true for all MRMS methods in general.

Regarding absolute stability we are interested in considering $z_i\leq 0$. For definiteness let's assume that
\[0 \geq z_1 \geq z_2 \geq \ldots \geq z_n. \]
We need to check the conditions
\begin{equation}
    |R(z_i)|\leq 1.
\end{equation}
To analyze the simplest case when all $|\eta_i|$ are equal it is reasonable to correlate polynomial $P$ from \eqref{eq:mre-P-opt} with the minimax polynomial $\tilde P\in \PP^1$,
\begin{equation}
    \tilde P=\argmin_{p\in\PP^1}\max_{z\in [z_n,0]}|p(z)|.
\end{equation}
As is known (see, e.g. \cite{Saad}) $\tilde P$ can be expressed using Chebyshev polynomial $T_2$ as
\begin{equation}\label{eq:tilde-P}
    \tilde P(z)=\frac{T_2\left(1-\dfrac{2z}{z_n}\right)}{T_2\left(1-\dfrac{2}{z_n}\right)}=\frac{8z^2-8z_n z+z_n^2}{z_n^2-8z_n+8}.
\end{equation}
The maximum deviation of $\tilde P(z)$ from zero is equal to 
\begin{equation}\label{eq:mre-tilde-eps}
    \tilde \epsilon =  \frac{z_n^2}{z_n^2-8z_n+8}.
\end{equation}


\subsubsection{Maximum-norm minimization}

In the case $\|\cdot\|=\|\cdot \|_\infty$ the use of \eqref{eq:tilde-P} gives us the following result.
\begin{proposition}\label{prop:mre-maxnorm}
 Consider minimization problem \eqref{eq:mre-P-opt} in maximum-norm. If all $|\eta_i|$ are equal, then 
\begin{equation}
    0 < R(z_i)<\frac{2}{1-z_i}\quad \forall\: i=1,\ldots,n.
\end{equation}
\end{proposition}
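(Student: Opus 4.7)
The plan is to recast the bounds on $R(z_i)$ as a pointwise bound on $|P(z_i)|$, where $P$ is the minimizer of \eqref{eq:mre-P-opt}, and then to control that bound from above by the minimax polynomial $\tilde P$ of \eqref{eq:tilde-P}. First, since all $|\eta_i|$ are equal to a common $c>0$, we have $\|P(\tau\Lambda)y_0\|_\infty = c\,\max_i|P(z_i)|$, so the maximum-norm instance of \eqref{eq:mre-P-opt} reduces to the discrete Chebyshev problem of minimizing $\max_i|P(z_i)|$ over $P\in\PP^1$; call its optimal value $\epsilon$.

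Second, I would show that $\epsilon<1$ already implies the proposition. From \eqref{eq:mre-p}, $|P(z_i)|\leq\epsilon<1$ is equivalent to $|1+(z_i-1)R(z_i)|<1$, i.e. $(z_i-1)R(z_i)\in(-2,0)$. Since $z_i\leq 0$ gives $z_i-1<0$, dividing by $z_i-1$ flips both inequalities and yields exactly $0<R(z_i)<\tfrac{2}{1-z_i}$, as claimed.

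Third, I would establish $\epsilon<1$ by using $\tilde P\in\PP^1$ from \eqref{eq:tilde-P} as an admissible competitor. Because $\{z_i\}\subset[z_n,0]$,
\[
\epsilon \;\leq\; \max_i|\tilde P(z_i)| \;\leq\; \max_{z\in[z_n,0]}|\tilde P(z)| \;=\; \tilde\epsilon.
\]
From \eqref{eq:mre-tilde-eps}, $\tilde\epsilon = z_n^2/(z_n^2-8z_n+8)$; for $z_n<0$ the quantity $-8z_n+8$ is strictly positive, so the denominator strictly exceeds the numerator and $\tilde\epsilon<1$. Combined with the previous step this closes the argument.

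The main (and essentially only) obstacle is the degenerate case $z_n=0$, where all $z_i=0$, the upper bound collapses to the vacuous $R(0)<2$, and the optimal coefficients $(\alpha,\beta)$ are non-unique; the stated inequalities should be read under the tacit assumption $z_n<0$. A secondary technical point is that the minimizing $P$ itself may fail to be unique when too few of the $z_i$ are distinct, but the bound $\epsilon\leq\tilde\epsilon$ is produced by a competitor argument and therefore applies to \emph{every} optimizer, which is all the derivation in the second paragraph needs.
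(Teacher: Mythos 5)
Your proposal is correct and follows essentially the same route as the paper: compare the optimizer against the Chebyshev minimax polynomial $\tilde P$ to get $|P(z_i)|\leq\tilde\epsilon<1$, then unwind $P(z)=1+(z-1)R(z)$ with $z_i-1<0$ to obtain $0<R(z_i)<2/(1-z_i)$. Your additional observations (that the competitor bound applies to every optimizer, and that the degenerate case $z_n=0$ must be excluded) are correct refinements the paper leaves implicit.
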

\begin{proof}
    By definition
    \begin{equation*}
        \|P(\tau\Lambda)y_0\|_\infty = \max_i|P(z_i)\eta_i| \leq \|\tilde P(\tau\Lambda)y_0\|_\infty = \max_i|\tilde P(z_i)\eta_i|\leq \tilde\epsilon |\eta_1|,
    \end{equation*}
    where $\tilde\epsilon$ is defined in \eqref{eq:mre-tilde-eps}. From this it follows that for any $i$
    \begin{equation*}
        |P(z_i)|=|1+(z_j-1)R(z_i)|\leq \tilde\epsilon<1.
    \end{equation*}
    \qed
\end{proof}
\begin{corollary}
    If conditions of proposition \ref{prop:mre-maxnorm} hold then $R(z_i)\in(0,1)$ for all $z_i\leq -1$.
\end{corollary}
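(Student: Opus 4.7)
The plan is to derive this as a one-line consequence of Proposition \ref{prop:mre-maxnorm}. The proposition already supplies the two-sided bound $0 < R(z_i) < \tfrac{2}{1-z_i}$ for every index $i$, so the lower bound $R(z_i) > 0$ required by the corollary is immediate and needs no further work.

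For the upper bound, I would simply specialize the inequality to indices with $z_i \leq -1$. Observe that the map $z \mapsto \tfrac{2}{1-z}$ is decreasing on $(-\infty, 1)$, and $z_i \leq -1$ gives $1 - z_i \geq 2$, whence $\tfrac{2}{1-z_i} \leq 1$. Chaining this with the proposition's bound yields $R(z_i) < \tfrac{2}{1-z_i} \leq 1$, and combined with $R(z_i) > 0$ we get $R(z_i) \in (0,1)$.

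There is no real obstacle here: the corollary is just the restriction of the proposition's interval $(0, \tfrac{2}{1-z_i})$ to the region $z_i \leq -1$, on which that interval is contained in $(0,1)$. The only mild care needed is to note that the argument uses the proposition exactly as stated (in particular the equi-modulus assumption on $|\eta_i|$ is inherited), and that strictness of the upper bound is preserved because the strict inequality from the proposition sits to the left of a non-strict one.
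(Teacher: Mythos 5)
Your proof is correct and is exactly the intended derivation: the paper states this corollary without proof precisely because it follows immediately from Proposition \ref{prop:mre-maxnorm} by noting that $z_i \leq -1$ gives $1 - z_i \geq 2$, hence $\frac{2}{1-z_i} \leq 1$. Nothing further is needed.
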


While in practice we use the 2-norm minimization, this simple result already shows that potentially the MRE method can be stable on stiff problems. 

\subsubsection{Minimization in 2-norm}

Firstly let's ensure that the MRE method is well-defined when minimization \eqref{eq:mre-ls} is held in $\|\cdot\|_2$. 
Problem \eqref{eq:mre-ls} is well-posed when columns of $W$ are linearly independent. By considering all minors of order 2 we get the following necessary and sufficient condition.
\begin{proposition}\label{prop:mre-stab-well-posed}
    Linear least squares problem \eqref{eq:mre-ls} is well-posed iff there exist $i\ne j$ such that $z_i\ne z_j\ne 1$ and $\eta_i \eta_j\ne 0$.
\end{proposition}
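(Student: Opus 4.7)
The plan is to characterize the rank of the $n\times 2$ matrix $W$ through its $2\times 2$ minors: since $W$ has only two columns, the columns are linearly independent (equivalently, the least-squares problem is well-posed) exactly when at least one $2\times 2$ minor is non-zero. So the proof reduces to computing a generic minor in closed form and reading off when it vanishes.

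Concretely, I would expand the $2\times 2$ determinant formed from rows $i$ and $j$ of $W$, pulling out the common factors $\eta_i(z_i-1)$ and $\eta_j(z_j-1)$ from the two rows. What remains is a simple determinant with rows $(1, z_i)$ and $(1, z_j)$, whose value is $z_j-z_i$. Hence the full minor factors cleanly as $\eta_i\eta_j(z_i-1)(z_j-1)(z_j-z_i)$. From this factored form the non-vanishing condition is immediate: $\eta_i$ and $\eta_j$ must both be non-zero and the three numbers $z_i$, $z_j$, $1$ must be mutually distinct.

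This matches the statement of the proposition, with the chained inequality "$z_i\ne z_j\ne 1$" (combined with the freedom to swap $i$ and $j$ afforded by the existence quantifier) encoding precisely the mutual distinctness of $z_i$, $z_j$, and $1$. There is no real analytical obstacle here; the whole argument is a one-line determinant computation plus a brief logical unpacking of the chained inequality into the three factor conditions $(z_i-1)\ne 0$, $(z_j-1)\ne 0$, $(z_j-z_i)\ne 0$.
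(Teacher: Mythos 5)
Your argument is exactly the one the paper intends: it states that the condition follows ``by considering all minors of order $2$'' and omits the computation, which you supply correctly as $\eta_i\eta_j(z_i-1)(z_j-1)(z_j-z_i)$, together with the correct observation that full column rank of the $n\times 2$ matrix $W$ is equivalent to some such minor being nonzero. Your reading of the chained inequality as mutual distinctness of $z_i$, $z_j$, $1$ is the intended one, so the proposal matches the paper's proof in both approach and substance.
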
 
It is important to realize that the solution of \eqref{eq:mre-ls} always exists, while well-posedness guarantees its uniqueness.

\begin{proposition}\label{prop:mre-ie-equi}
    Consider model linear system \eqref{eq:model-lin-sys} with time step $\tau$ and let the implicit Euler approximate solution $y_1^{IE}$ be well-defined, i.~e. $z_i=\tau\lambda_i\ne 1$ for all $i$. In the following cases the MRE solution $y_1^{MRE}$ coincides with $y_1^{IE}$.

    (i) Linear least squares problem \eqref{eq:mre-ls} 
    is well-posed, but the set $\{z_i\}$ contains only two distinct elements.
     
    (ii) Problem \eqref{eq:mre-ls} is not well-posed (the conditions of proposition \ref{prop:mre-stab-well-posed} are not fulfilled).  
\end{proposition}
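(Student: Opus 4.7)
The plan is to show in both cases that one can exhibit a feasible polynomial $R(z)=\alpha+\beta z$ making the residual $P(\tau\Lambda)y_0=0$ (with $P$ as in \eqref{eq:mre-p}), and that the resulting vector $y_1^{MRE}$ agrees componentwise with $y_1^{IE}$, even when the minimizer $(\alpha,\beta)$ is not unique.

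The key observation is that componentwise $(y_1^{IE})_i=\eta_i/(1-z_i)$, so coincidence with $y_1^{IE}$ amounts to $R(z_i)=1/(1-z_i)$, equivalently $P(z_i)=0$, at every index $i$ with $\eta_i\ne 0$. For case (i), with two distinct values $z_a\ne z_b$, both different from 1, I would take the unique linear polynomial $R$ determined by the interpolation conditions $R(z_a)=1/(1-z_a)$ and $R(z_b)=1/(1-z_b)$. This $R$ yields a quadratic $P$ with $P(1)=1$ and $P(z_a)=P(z_b)=0$, hence $P(\tau\Lambda)y_0=0$; since \eqref{eq:mre-ls} is well-posed its minimizer is unique, so the MRE coefficients must be exactly these, giving $y_1^{MRE}=y_1^{IE}$.

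For case (ii), I would first characterise the failure of well-posedness via proposition \ref{prop:mre-stab-well-posed}: combined with $z_i\ne 1$ for all $i$, it forces the set of active indices $S=\{i:\eta_i\ne 0\}$ either to satisfy $|S|\le 1$, or to have all its members share a single common value $z^*$. In each subcase a whole one-parameter family of $R$ still achieves zero residual, namely every $R$ satisfying the single constraint $R(z_i)=1/(1-z_i)$ for the unique active $i$ (first subcase), or $R(z^*)=1/(1-z^*)$ (second subcase). Although the set of minimizers is then an affine line in $\RR^2$, the output $(y_1^{MRE})_i=R(z_i)\eta_i$ depends on $R$ only through its values at indices in $S$, and those values are pinned by the zero-residual condition; hence $y_1^{MRE}$ is constant along this line and equals $y_1^{IE}$.

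The main obstacle I anticipate is purely the bookkeeping in case (ii): enumerating the degenerate sub-situations and arguing that, despite the non-uniqueness of $(\alpha,\beta)$, the vector $y_1^{MRE}$ produced by any minimizer is the same. The remainder reduces to standard polynomial interpolation of a linear polynomial at two distinct points and componentwise evaluation against the diagonal action of $\Lambda$ on $y_0$.
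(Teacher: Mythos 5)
Your proposal is correct and follows essentially the same route as the paper: exhibit the linear polynomial $R$ interpolating $(1-z)^{-1}$ at the (at most two) distinct relevant values of $z_i$, so that the BDF/implicit-Euler residual vanishes identically and $y_1^{MRE}=R(\tau\Lambda)y_0=y_1^{IE}$. Your treatment of case (ii) — characterising the failure of well-posedness and checking that $y_1$ is constant along the affine line of minimizers — is a careful elaboration of what the paper dismisses as "the similar manner," and is a welcome addition rather than a deviation.
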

\begin{proof}
    In case (i) let $z_1$ and $z_2$ be the two possible distinct values of $z_i$. We can always choose the polynomial $R$ from \eqref{eq:mre-Rz} such that $R(z_1)=(1-z_1)^{-1}$ and $R(z_2)=(1-z_2)^{-1}$, so $r_1(y_1^{MRE})=0$.
    Case (ii) can be easily proved in the similar manner.\qed
\end{proof}

It is interesting that the MRE optimization problem can be well-defined even if the implicit Euler method is not.
\begin{example}\label{ex:ie-fail}
    Consider $n=3$, $y_0=(1,1,1)^T$, $\lambda=\{-1,0,1\}$ and $\tau=1$ the MRE stability polynomial from \eqref{eq:mre-Rz} is $R(z)=1+\frac 12 z$. 
\end{example}

From proposition \ref{prop:mre-ie-equi} it follows that in case when the optimization problem \eqref{eq:mre-ls} is not well-posed, the MRE method appears to be absoltely stable. Hereinafter we are going to consider only well-posed least-squares problems.

Denoting \[s_i=1-z_i\] the normal equations for \eqref{eq:mre-ls} take the form
\begin{equation}\label{eq:mre-normal-eqs}
\renewcommand{\arraystretch}{1.5}
    \begin{bmatrix}
        \sum \eta_i^2 s_i^2 & \sum \eta_i^2  s_i^2 z_i\\
        \sum \eta_i^2 s_i^2 z_i & \sum \eta_i^2  s_i^2 z_i^2
    \end{bmatrix}
    \begin{bmatrix}
    \alpha\\ \beta
    \end{bmatrix}=
    \begin{bmatrix}
    \sum \eta_i^2  s_i\\
    \sum \eta_i^2  s_i z_i
    \end{bmatrix}
\end{equation}
and the solution is
\begin{equation}
    \alpha=\frac{\Delta_1}{\Delta},\quad \beta=\frac{\Delta_2}{\Delta},
\end{equation}
where $\Delta$ and $\Delta_i$ are usual Kramer determinants of \eqref{eq:mre-normal-eqs} in which the summation is held over all $i$ from $1$ to $n$.

\begin{lemma}\label{lem:mre-stab-positive}
    Consider the MRE method  \eqref{eq:mrEuler} applied to linear system \eqref{eq:model-lin-sys}. Suppose that the conditions of proposition \ref{prop:mre-stab-well-posed} are fulfilled and all $z_i\leq 0$.
    Then the coefficients $\alpha$ and $\beta$  
    satisfy $0 \leq\beta\leq\alpha\leq 1$.
\end{lemma}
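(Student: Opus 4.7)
The plan is to apply Cramer's rule to \eqref{eq:mre-normal-eqs} and analyse the sign of the four quantities $\Delta_2$, $\Delta_1$, $\Delta_1-\Delta_2$ and $\Delta-\Delta_1$. Since the problem is well-posed we have $\Delta>0$, so these four determinants control the signs of $\beta$, $\alpha$, $\alpha-\beta$ and $1-\alpha$, respectively, and the lemma reduces to proving that each of them is non-negative.

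The main tool will be the standard symmetrization trick. Expanding $\Delta_2$ as a double sum over pairs $(i,j)$ and averaging it with its transpose, the elementary identity
\[
s_i - s_j = z_j - z_i
\]
converts a cross-difference into a square and gives
\[
2\Delta_2 = \sum_{i,j} \eta_i^2 \eta_j^2\, s_i s_j\, (z_i - z_j)^2 \geq 0,
\]
because $s_i = 1 - z_i \geq 1$ whenever $z_i\leq 0$. Hence $\beta\geq 0$.

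Applying the same symmetrization to the three remaining determinants, I expect each of them to come out as the same squared-difference kernel multiplied by an extra factor $X_{ij}$ depending only on $z_i$ and $z_j$. A short calculation using the auxiliary identity $s_i z_i - s_j z_j = (z_i - z_j)(1 - z_i - z_j)$ should produce $X_{ij} = 1 - z_i - z_j$ for $\Delta_1$, $X_{ij} = z_i z_j$ for $\Delta - \Delta_1$, and $X_{ij} = -(z_i + z_j)$ for $\Delta_1 - \Delta_2$. All three factors are non-negative on the region $z_i, z_j\leq 0$, so all three determinants are non-negative, giving in turn $\alpha\geq 0$, $\alpha\leq 1$ and $\alpha\geq\beta$.

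The hard part is nothing conceptual; it is pure bookkeeping. One has to expand four $2{\times}2$ determinants as double sums over $(i,j)$ and track signs carefully during the antisymmetrization, the only real risk being to drop a sign and miss the clean square $(z_i-z_j)^2$ at the end. Once the four factors $1$, $1-z_i-z_j$, $z_i z_j$ and $-(z_i+z_j)$ are isolated, their non-negativity on the closed negative half-line is obvious and the chain $0\leq\beta\leq\alpha\leq 1$ follows immediately.
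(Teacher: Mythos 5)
Your proposal is correct and takes essentially the same route as the paper: both apply Cramer's rule to the normal equations \eqref{eq:mre-normal-eqs} and reduce the chain $0\leq\Delta_2\leq\Delta_1\leq\Delta$ to the non-negativity of pairwise kernels of the form $\eta_i^2\eta_j^2 s_i s_j (z_i-z_j)^2 X_{ij}$, and your four factors $X_{ij}=1$, $1-z_i-z_j$, $-(z_i+z_j)$, $z_i z_j$ all check out (via $s_i-s_j=z_j-z_i$ and $s_iz_i-s_jz_j=(z_i-z_j)(1-z_i-z_j)$). The only difference is organizational: the paper packages the same pairwise comparison as an induction on $n$, adding at each step the terms involving the new index, whereas you symmetrize the double sums in one shot.
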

\begin{proof}
    The proof is by induction on $n$. Let $\Delta^n$, $\Delta^n_1$ and $\Delta^n_2$ be the corresponding Kramer determinants of \eqref{eq:mre-normal-eqs}.
    After collecting similar terms these determinants can be written as
 \begin{subequations}\label{eq:mre-deltas}
       \begin{align}
            &\Delta^n=\displaystyle\sum_{i=1}^n\eta_i^2 s_i^2 z_i
                \displaystyle\sum_{j=1}^n\eta_j^2 s_j^2(z_i-z_j),\\
            &\Delta^n_1=\displaystyle\sum_{i=1}^n\eta_i^2 s_i^2 z_i
                \displaystyle\sum_{j=1}^n\eta_j^2 s_j(z_i-z_j),\\
            &\Delta^n_2=-\displaystyle\sum_{i=1}^n\eta_i^2 s_i^2
                \displaystyle\sum_{j=1}^n\eta_j^2 s_j(z_i-z_j).
        \end{align}
     \end{subequations}
     For $n=2$ we have
    \[
        0\leq \Delta^2_2\leq \Delta^2_1 \leq \Delta^2.
    \] 
    In general case by direct computation it can be shown that
    \[
        \Delta^n=\Delta^{n-1}+\delta^{n-1},\quad
        \Delta^n_1=\Delta^{n-1}_1+\delta^{n-1}_1,\quad
        \Delta^n_2=\Delta^{n-1}_2+\delta^{n-1}_2,
    \] 
    where
     \begin{align*}
        &\delta^{n-1}=\eta_n^2 s_n^2
            \displaystyle\sum_{i=1}^{n-1}\eta_i^2 s_i^2(z_i-z_n)^2,\\
        &\delta^{n-1}_1=\eta_n^2 s_n
            \displaystyle\sum_{i=1}^{n-1}\eta_i^2 (z_i-z_n)^2(1-z_i-z_n),\\
        &\delta^{n-1}_2=\eta_n^2 s_n
            \displaystyle\sum_{i=1}^{n-1}\eta_i^2 s_i (z_i-z_n)^2.
    \end{align*}   
Since 
\[
    0\leq \delta^{n-1}_2\leq \delta^{n-1}_1\leq \delta^{n-1}
\] 
for all $n\geq 2$, in general case we have  
    \[
        0\leq \Delta^n_2\leq \Delta^n_1 \leq \Delta^n 
    \] 
and $0 \leq\beta=\Delta^n_2/\Delta^n \leq \alpha = \Delta^n_1/\Delta^n\leq 1$.
\qed
\end{proof}

A consequense of this lemma is that $R(z_i)\leq 1$ for $z_i\leq 0$, and the stability condition now reduces to $R(z_i)\geq -1$. 
Since $R$ is a first-degree polynomial, if $R(z_n)\geq -1$ then we will have $R(z_i)\geq -1$ for all the rest $z_i$.

\begin{lemma}
Suppose that the conditions of proposition \ref{prop:mre-stab-well-posed} are fulfilled, all $z_i\leq 0$ and all components of $y_0$ have equal magnitudes: $|\eta_i|=|\eta_1|$. If 
\[R(z_n)=\alpha+\beta z_n < -1\]
then 
\begin{equation}\label{eq:Bzn}
    B_n(z_n)=(2-z_n)(z_n^2-8z_n+8)-\sqrt n z_n^2 < 0.
\end{equation}
\end{lemma}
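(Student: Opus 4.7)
The plan is to obtain the inequality $B_n(z_n)<0$ by comparing the 2-norm of $P(\tau\Lambda)y_0$ with that of the Chebyshev minimax polynomial $\tilde P$ from \eqref{eq:tilde-P}, and then extracting the single-component bound at $z=z_n$.

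First I would observe that, by the optimality property \eqref{eq:mre-P-opt},
\[
    \|P(\tau\Lambda)y_0\|_2 \leq \|\tilde P(\tau\Lambda)y_0\|_2.
\]
Because all $|\eta_i|$ are equal and $|\tilde P(z)|\leq \tilde\epsilon$ on $[z_n,0]$, the right-hand side is bounded from above by $\sqrt{n}\,\tilde\epsilon\,|\eta_1|$, where $\tilde\epsilon$ is the quantity from \eqref{eq:mre-tilde-eps}. On the other hand, retaining only the $i=n$ term gives
\[
    \|P(\tau\Lambda)y_0\|_2 \geq |P(z_n)|\,|\eta_n| = |P(z_n)|\,|\eta_1|.
\]
Combining these two estimates yields the scalar inequality
\[
    |P(z_n)| \leq \sqrt{n}\,\tilde\epsilon = \frac{\sqrt{n}\,z_n^2}{z_n^2-8z_n+8}.
\]

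Next I would translate the hypothesis $R(z_n)<-1$ into a lower bound on $|P(z_n)|$. Using \eqref{eq:mre-p} and the fact that $z_n\leq 0$ implies $z_n-1<0$, the product $(z_n-1)R(z_n)$ exceeds $-(z_n-1)=1-z_n$, so
\[
    P(z_n)=1+(z_n-1)R(z_n) > 2 - z_n \geq 2 > 0,
\]
hence $|P(z_n)|>2-z_n$. Substituting this into the upper bound obtained above gives
\[
    2-z_n < \frac{\sqrt{n}\,z_n^2}{z_n^2-8z_n+8}.
\]
Since $z_n^2-8z_n+8>0$ whenever $z_n\leq 0$, one can clear the denominator without reversing the inequality, which rearranges exactly to $B_n(z_n)=(2-z_n)(z_n^2-8z_n+8)-\sqrt{n}\,z_n^2 < 0$.

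The only nontrivial step is the comparison $\|P(\tau\Lambda)y_0\|_2\leq \|\tilde P(\tau\Lambda)y_0\|_2$, and this requires that $\tilde P$ be admissible in the minimization \eqref{eq:mre-P-opt}, i.e. that $\tilde P\in\PP^1$, so that $\tilde P(1)=1$. This is immediate from the closed form \eqref{eq:tilde-P}. Everything else is elementary algebra, so I do not anticipate any serious obstacle; the only care required is in tracking signs to justify $|P(z_n)|>2-z_n$ and in noting the positivity of $z_n^2-8z_n+8$ when clearing denominators.
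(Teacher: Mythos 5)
Your proof is correct and follows essentially the same route as the paper's: compare $\|P(\tau\Lambda)y_0\|_2$ with the norm attained by the admissible Chebyshev minimax polynomial $\tilde P$, isolate the $n$-th component, and use $R(z_n)<-1$ together with $z_n-1<0$ to get $P(z_n)>2-z_n$, which yields $(2-z_n)(z_n^2-8z_n+8)<\sqrt n\, z_n^2$. The only cosmetic difference is that the paper works with squared norms and you work with the norms directly; the argument is the same.
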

\begin{proof}
    The proofs is similar to that of proposition \ref{prop:mre-maxnorm}. First of all by definition of $P$ we have 
    \begin{equation*}
        P(z_n) = 1+(z_n-1)R(z_n) > 2-z_n    
    \end{equation*}
    and 
    \begin{equation*}
        \|P(\tau\Lambda)y_0\|^2_2>\eta_1^2\left(
        (2-z_n)^2+\sum_{i-1}^{n-1}P(z_i)^2
        \right).
    \end{equation*}
    On the other hand,
    \begin{equation*}
        \|P(\tau\Lambda)y_0\|_2^2\leq \|\tilde P(\tau\Lambda)y_0\|_2^2\leq n \eta_1^2 \tilde\epsilon^2.
    \end{equation*}
    Combining these two inequalities and using \eqref{eq:mre-tilde-eps} we have
    \begin{equation*}
        (2-z_n)^2< n \left(\frac{z_n^2}{z_n^2-8z_n+8}\right)^2.
    \end{equation*}
    \qed
\end{proof}
A simple analysis of polynomial $B$ from \eqref{eq:Bzn} yields that
\begin{itemize}
    \item $B_n(z)$ is positive for all $z\leq 0$ if $\sqrt n < 2(5+3\sqrt 3)\approx 20.3923$;
    \item For $\sqrt n > 2(5+3\sqrt 3)$ $B_n$ has two real negative roots $a_n, b_n$ such that $B_n(z)<0$ for $z\in(a_n,b_n)$;
    \item The leftmost root $a_n$ lies between $10-\sqrt n$ and $11-\sqrt n$.
\end{itemize}
\begin{corollary} \label{cor:mre-stab}
    Let the conditions of proposition \ref{prop:mre-stab-well-posed} hold, all $z_i\leq 0$ and all $|\eta_i|$ are equal. Then $|R(z_i)|
    \leq 1$ for all $i=1,\ldots n$ in the following cases:

    (i) $n\leq 415$;

    (ii) $n>415$ and $z_n \leq 10-\sqrt n$.        
\end{corollary}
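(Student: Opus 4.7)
The plan is to split the verification $|R(z_i)|\leq 1$ into its upper and lower halves, dispose of the upper half by Lemma \ref{lem:mre-stab-positive}, and reduce the lower half to the single condition $B_n(z_n)\geq 0$, which the three bullet points preceding the corollary already describe completely.

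First I would note that the upper bound is free: by Lemma \ref{lem:mre-stab-positive} we have $0\leq \beta\leq \alpha\leq 1$, so for every $z_i\leq 0$
\[
    R(z_i)=\alpha+\beta z_i \leq \alpha \leq 1.
\]
Second, for the lower bound, I would use that $R$ is affine with nonnegative slope $\beta$, hence monotonically nondecreasing on $(-\infty,0]$. Because $z_n$ is the most negative node among $z_1,\ldots,z_n$, $R(z_n)=\min_i R(z_i)$, and it suffices to prove $R(z_n)\geq -1$. The contrapositive of the lemma immediately preceding the corollary states exactly that $R(z_n)\geq -1$ whenever $B_n(z_n)\geq 0$, so the remaining task is to verify $B_n(z_n)\geq 0$ in both cases (i) and (ii).

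Third, I would invoke the three properties of $B_n$ listed before the corollary. In case (i), $n\leq 415$ gives $\sqrt n\leq \sqrt{415}<2(5+3\sqrt 3)\approx 20.3923$, so by the first bullet $B_n(z)\geq 0$ for all $z\leq 0$, and in particular $B_n(z_n)\geq 0$. In case (ii), $n>415$ falls into the regime of the second and third bullets: $B_n$ has two negative real roots $a_n<b_n<0$ with $B_n\geq 0$ outside $(a_n,b_n)$, and the leftmost root satisfies $a_n>10-\sqrt n$. Therefore the hypothesis $z_n\leq 10-\sqrt n$ places $z_n$ to the left of $a_n$, which yields $B_n(z_n)\geq 0$.

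The proof is essentially a bookkeeping argument: the analytic work has been done in Lemma \ref{lem:mre-stab-positive}, in the preceding lemma on $B_n$, and in the case analysis of the cubic $B_n$. The only mild subtlety I anticipate is checking that $n=415$ is indeed the largest integer with $\sqrt n<2(5+3\sqrt 3)$ (since $\sqrt{416}>2(5+3\sqrt 3)$), and carefully phrasing the monotonicity step so that the single inequality $R(z_n)\geq -1$ really controls all indices; neither of these is a genuine obstacle.
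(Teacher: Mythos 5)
Your proof is correct and follows exactly the route the paper sets up: the upper bound $R(z_i)\leq 1$ from Lemma \ref{lem:mre-stab-positive}, the reduction to $R(z_n)\geq -1$ via monotonicity of the affine $R$, the contrapositive of the preceding lemma, and the listed sign analysis of $B_n$ (including the correct check that $\sqrt{415}<2(5+3\sqrt{3})<\sqrt{416}$). Nothing to add.
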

Note that the requirements of corollary \ref{cor:mre-stab} are not necessary, but sufficient for stability. 

\subsection{Instability cases}\label{ss:instab}

Though the result of corollary \ref{cor:mre-stab} is quite optimistic we should always keep in mind the dependence of the stability polynomial $R$ on the values of $\eta_i$. In fact it is quite easy to construct a case when $|R(z_n)|>1$. It suffices to consider $\eta_n=0$: now the $n$-th component of $P(\tau \Lambda)y_0$ is always zero and $P(z_n)$ does not influence the functional that is being minimized. Taking into account lemma \ref{lem:mre-stab-positive} this means that when $\eta_n=0$ and $z_n \ll z_{n-1}$ then $R(z_n)\ll -1$.
On the other hand this is not a big problem, since this huge number will be multiplied by zero. But by continuity argument in such a case we will get $R(z_n)<-1$ also for all $\eta_n$ in a neighborhood of zero. Anyway the instability in this case does not occur for big values of $\eta_i$ and thus can not lead to a blow-up. The following example illustrates the described situation.
\begin{example}\label{ex:instab}
    Let $n=3$, $y_0=(1,1,\eta)^T$, $z_1=0$, $z_2=-1$, and $z_3$ is a parameter. Then
    $$R(z_3)=
    \dfrac{2 (z_3+2)-\eta ^2 (z_3-1) (5 z^2_3+8z_3+4)}
    {\eta ^2 (z_3-1)^2 (5 z_3^2+8z_3 +4)+4},
    $$
     so at $\eta=0$ we have $R(z_3)=1+z_3/2$. Graphs of $-R(z_3)$ are shown at the first row of Figure \ref{fig:example2}. They display that $R(z_3)$ tends to 0 very fast as $\eta$ grows, but remains negative in a neighborhood of 0. This will result in a fading wiggling around equilibrium in an approximate solution as $t\to\infty$. 

     Another important observation is the behavior of $R(0)$ which is equal to
     \begin{equation}
        R(0)=\frac{\eta ^2 ((z_3 -2) z_3  (z_3 +1) (3 z_3 -1)+4)+4}{\eta ^2 (z_3 -1)^2(5 z_3^2+8z_3 +4) +4}.
    \end{equation}
    The second row of Figure \ref{fig:example2} shows how $R(0)$ depends on $\eta$ for $z_3=-10$ and $z_3=-10^6$. We see that $R(0)$ can be much less than 1. This means that generally very slow eigenmodes of the solution are heavily damped by the MRE (lemma \ref{lem:mre-stab-positive} confirms this statement). This is the main reason of poor accuracy of the MRE method in our first experiment from Figure \ref{fig:order-1}. 

     To sum up these observations consider $\eta=1$, $\lambda_3=-10^5$, $\tau=0.05$ and compare the MRE method and the implicit Euler method on $[0,1]$. The results are shown on Figure \ref{fig:example2-2}. 
    \begin{figure}\label{fig:example2}
    \includegraphics[width=14cm]{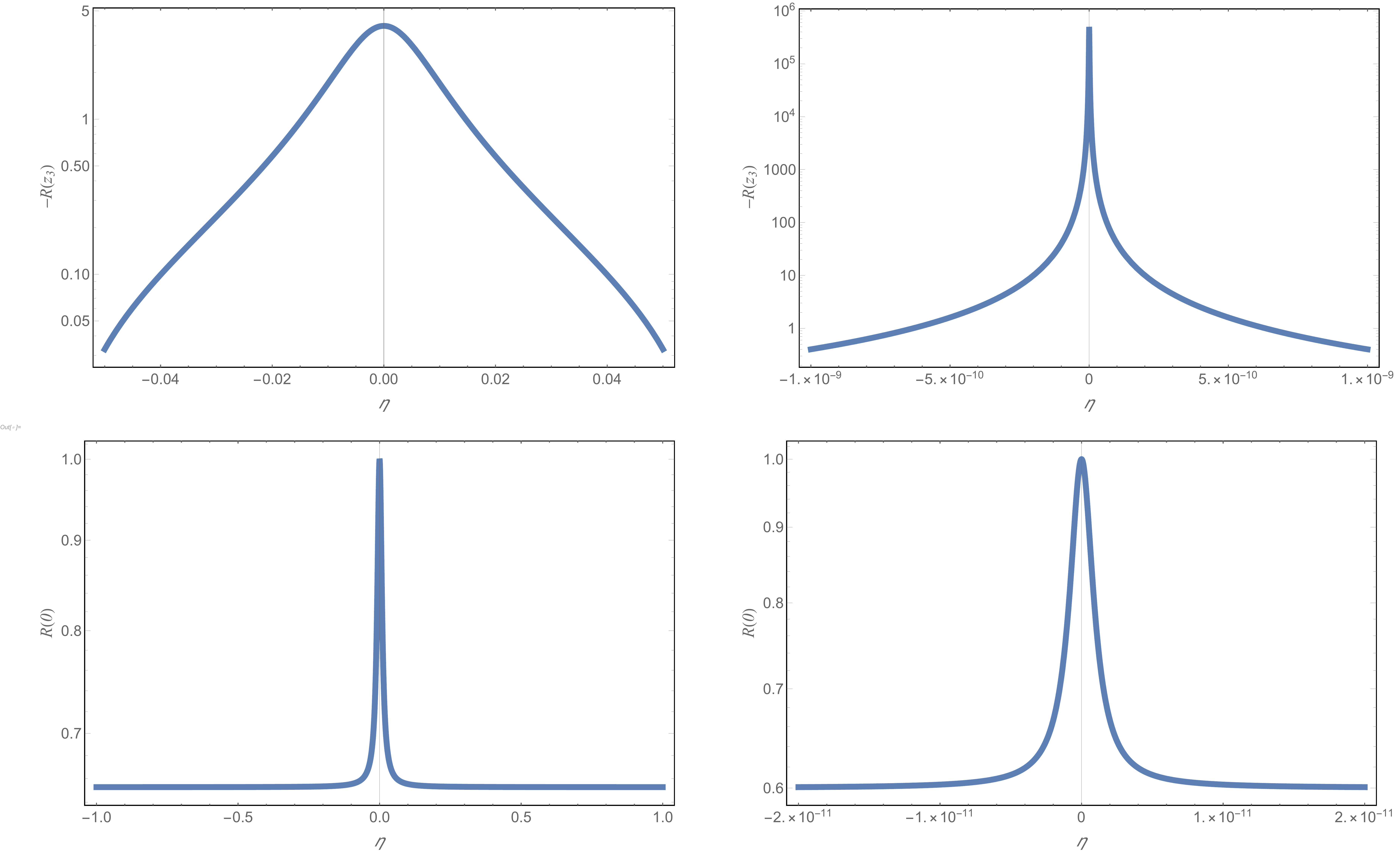} 
    \caption{Illustration of example \ref{ex:instab}. Graphs of dependence of $-R(z_3)$ (top row) and $R(0)$ (bottom row) on $\eta$ for $z_3=-10$ (left column) and $z_3=-10^6$ (right column).}
    \end{figure}

    \begin{figure}\label{fig:example2-2}
    \begin{tabular}{ccc}
        \includegraphics[width=6cm]{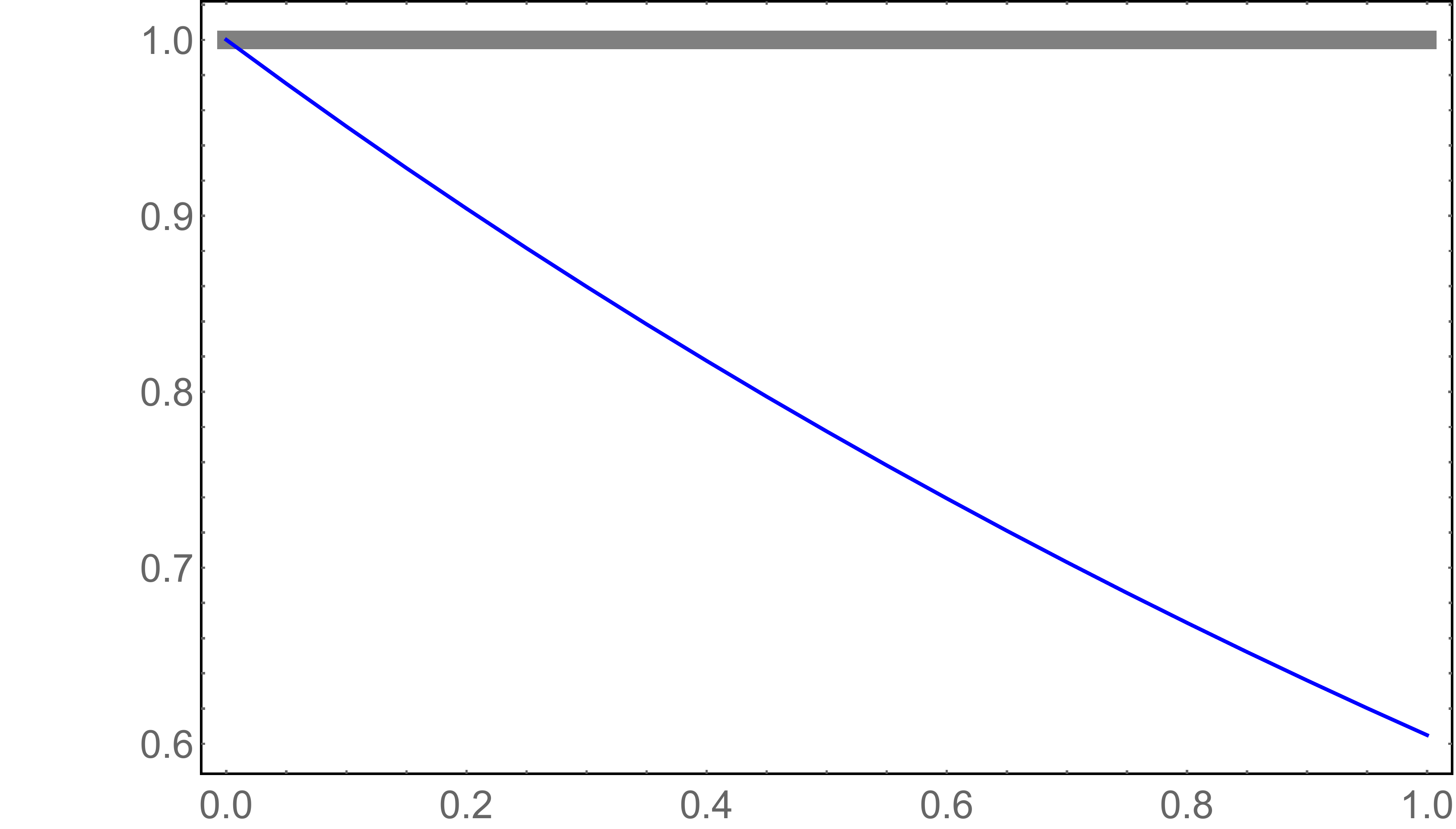} 
        \includegraphics[width=6cm]{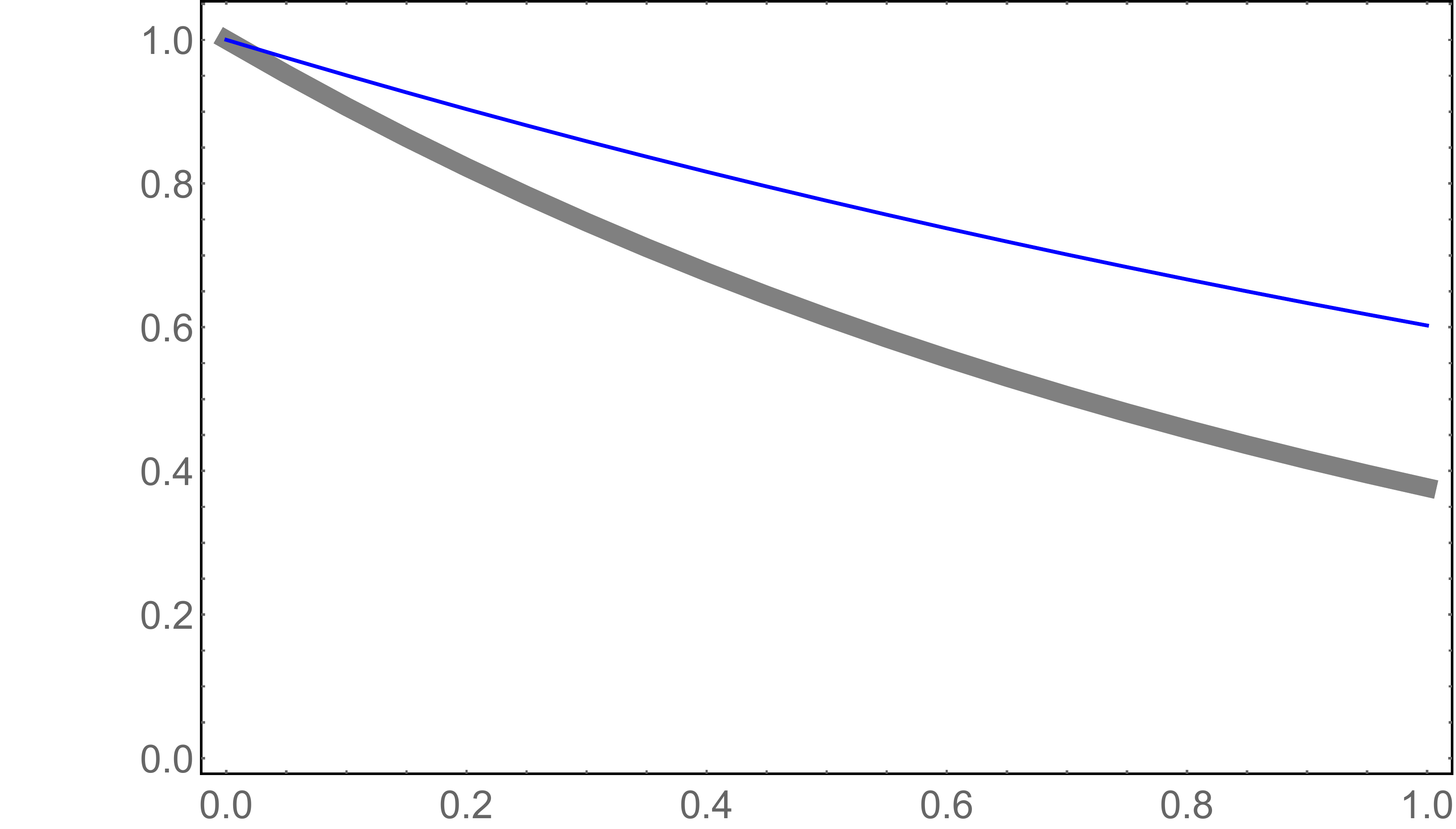} \\
        \includegraphics[width=6cm]{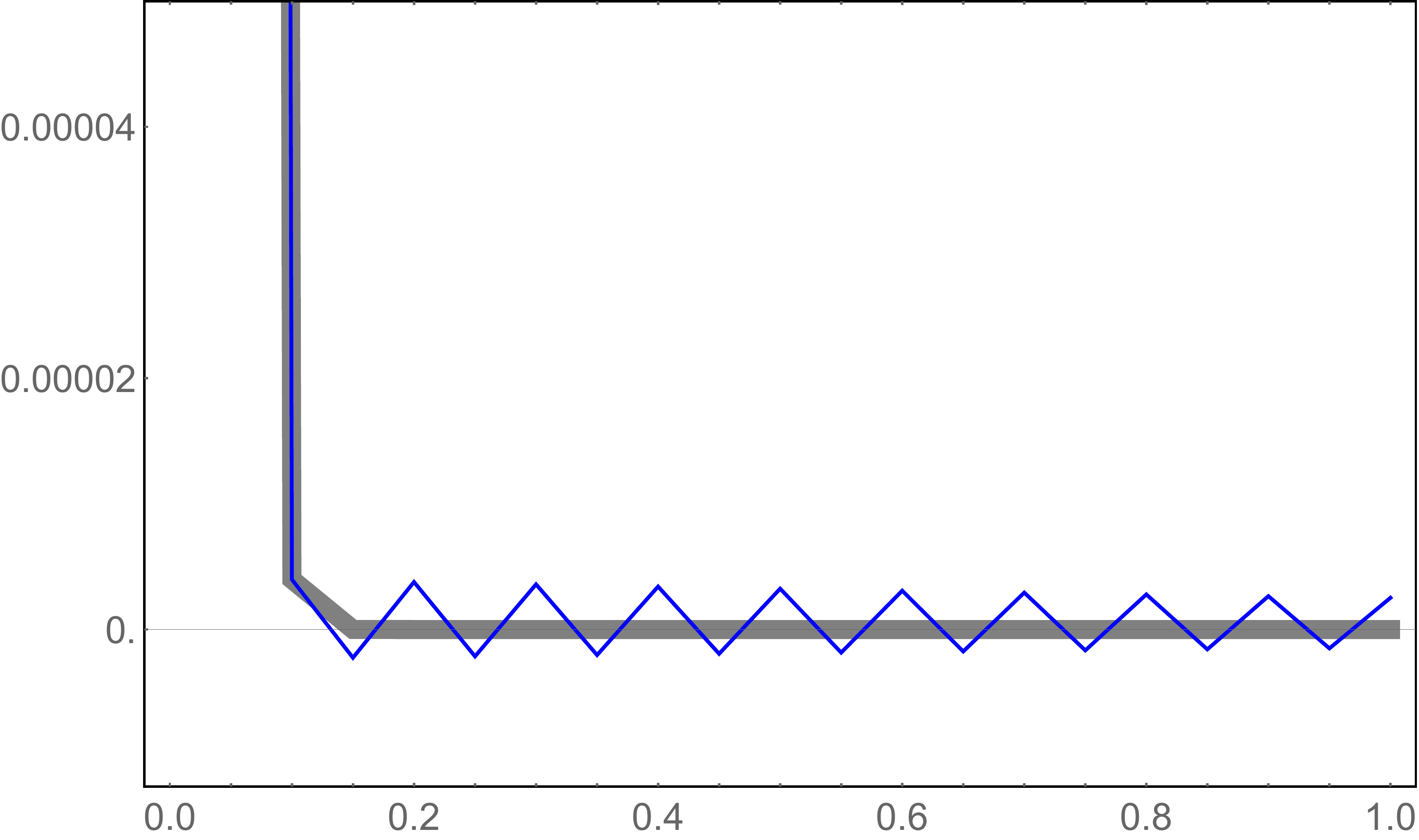}     
    \end{tabular}
    \caption{Numerical results for example \ref{ex:instab}. Graphs of approximations to solution of the system $y_1'=0, y_2'=-y_2, y_3'=-10^5, y_i(0)=1$, by the MRE method (thin blue line) and the implicit Euler method (thick gray line) are shown. Constant time step $\tau=0.05$ is used.}
    \end{figure}
\end{example}

\subsection{Linear stability in general case}
\label{ss:gen-lin-stab}

The dependence of the MRMS coefficients on the values of the solution makes the general analysis of linear stability a difficult problem. If we try to follow the common scheme of absolute stability analysis for linear multistep methods \cite[V.1]{hw-vol2}, then we need to analyze the location of roots of equation
\begin{equation}
     \rho(\zeta) - \mu\sigma(\zeta) = 0,
 \end{equation} 
 where $\rho(\zeta)=\zeta^k+\displaystyle \sum_{j=0}^{k-1}\alpha_j \zeta^j$, $\sigma(\zeta)=\displaystyle \sum_{j=0}^{k-1}\beta_j \zeta^j$. Just as with the MRE method two issues complicate things: (i) the coefficients $\{\alpha_j\}$ and $\{\beta_j\}$ are nonlinear functions of $\{y_j\}$; (ii) a set of values for parameter $\mu$ must be considered at once, and the distribution of these values must be taken into account. It is not very clear how to deal with this problem, so we leave it for future research.

 To see how much can we ever expect let's repeat the experiment from example \ref{ex:order} and Figure \ref{fig:order-1} with $\lambda_{\max}=10^7$. Recall that the system $y'_i=\lambda_i y + 1$, $i=1, 2, \ldots, 100$ is solved, where $\lambda_i$ are evenly distributed on $[-\lambda_{\max}, 0]$ (including the endpoints). For such a problem implicit Euler method readily jumps to the equilibrium. We consider this method, the MRMS($p$,$p$) and the MRMS($p+1$,$p$) methods on Figure \ref{fig:diag-2}. 

Now let's make an important modification of the problem: instead of uniform distribution let $\lambda_i = -(10^{m_i})$, where $m_i$ are equispaced on $[-7, 7]$. Thereby now we have more eigenvalues near zero, while the maximum negative eigenvalue is the same as before. The corresponding diagrams are shown at Figure \ref{fig:diag-3}. Note that there we displayed the results of MRMS($p$,$p$) and  MRMS($p+4$,$p$). 

\begin{figure}
\begin{tabular}{ccc}
    \includegraphics[width=13cm]{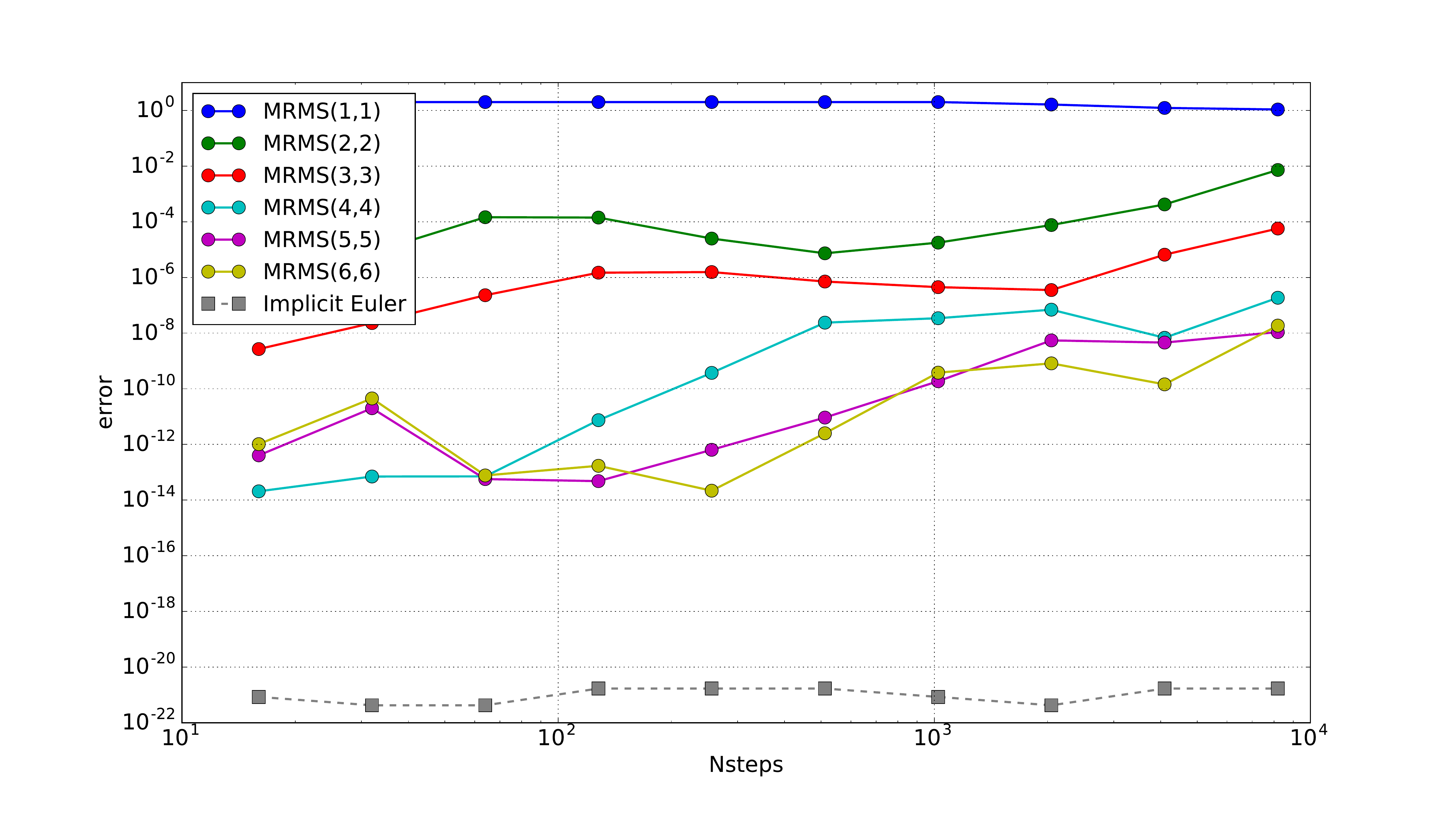} \\
    \includegraphics[width=13cm]{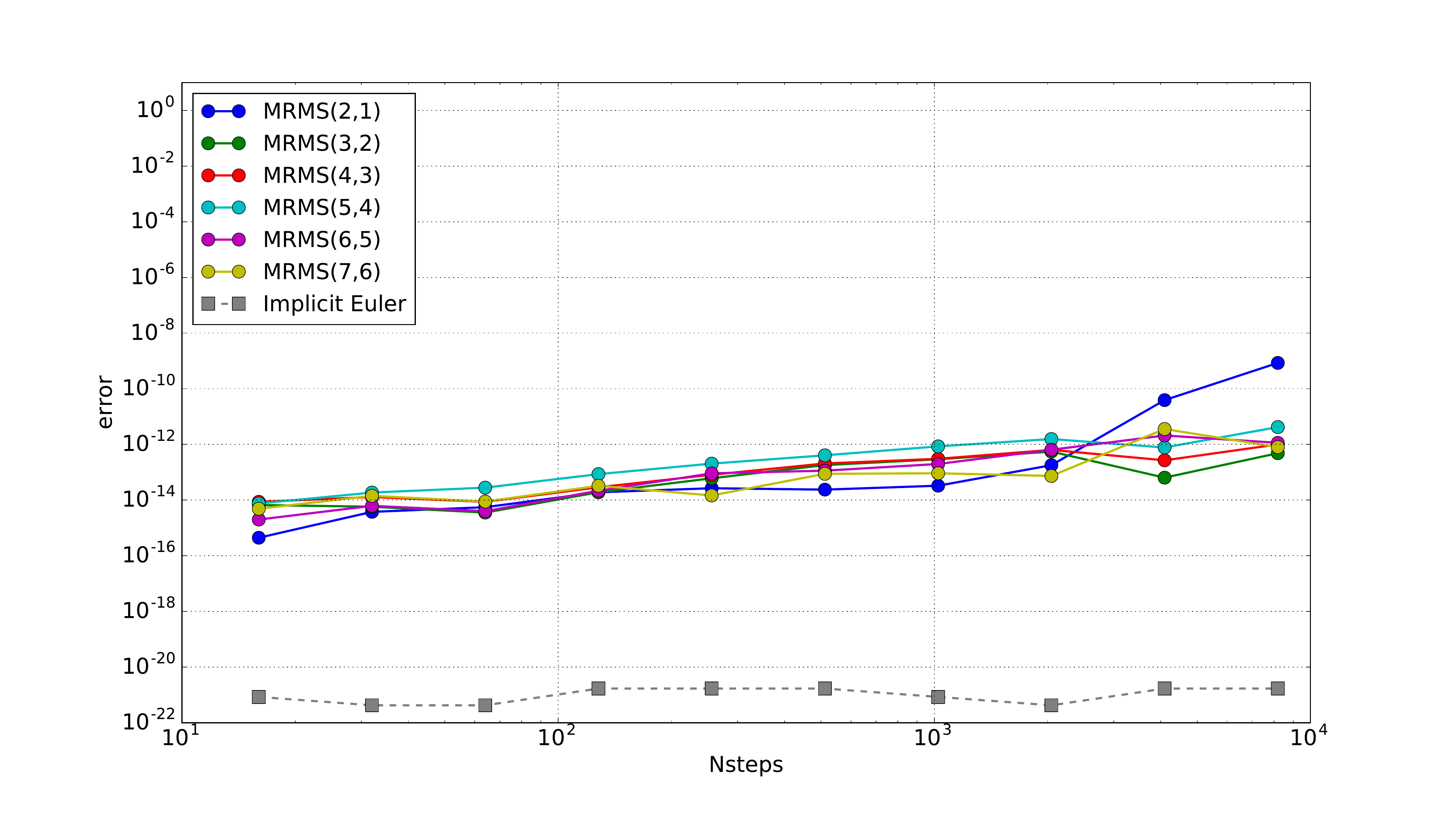} \\
\end{tabular}
\caption{The results of numerical experiment with stiff linear model problem \eqref{eq:model-lin-sys} from subsection \ref{ss:gen-lin-stab}. The eigenvalues of the diagonal matrix of ODE system \eqref{eq:linear-order-test} are uniformly distributed on $[-10^6, 0]$.}
\label{fig:diag-2}
\end{figure}

\begin{figure}
\begin{tabular}{ccc}
    \includegraphics[width=13cm]{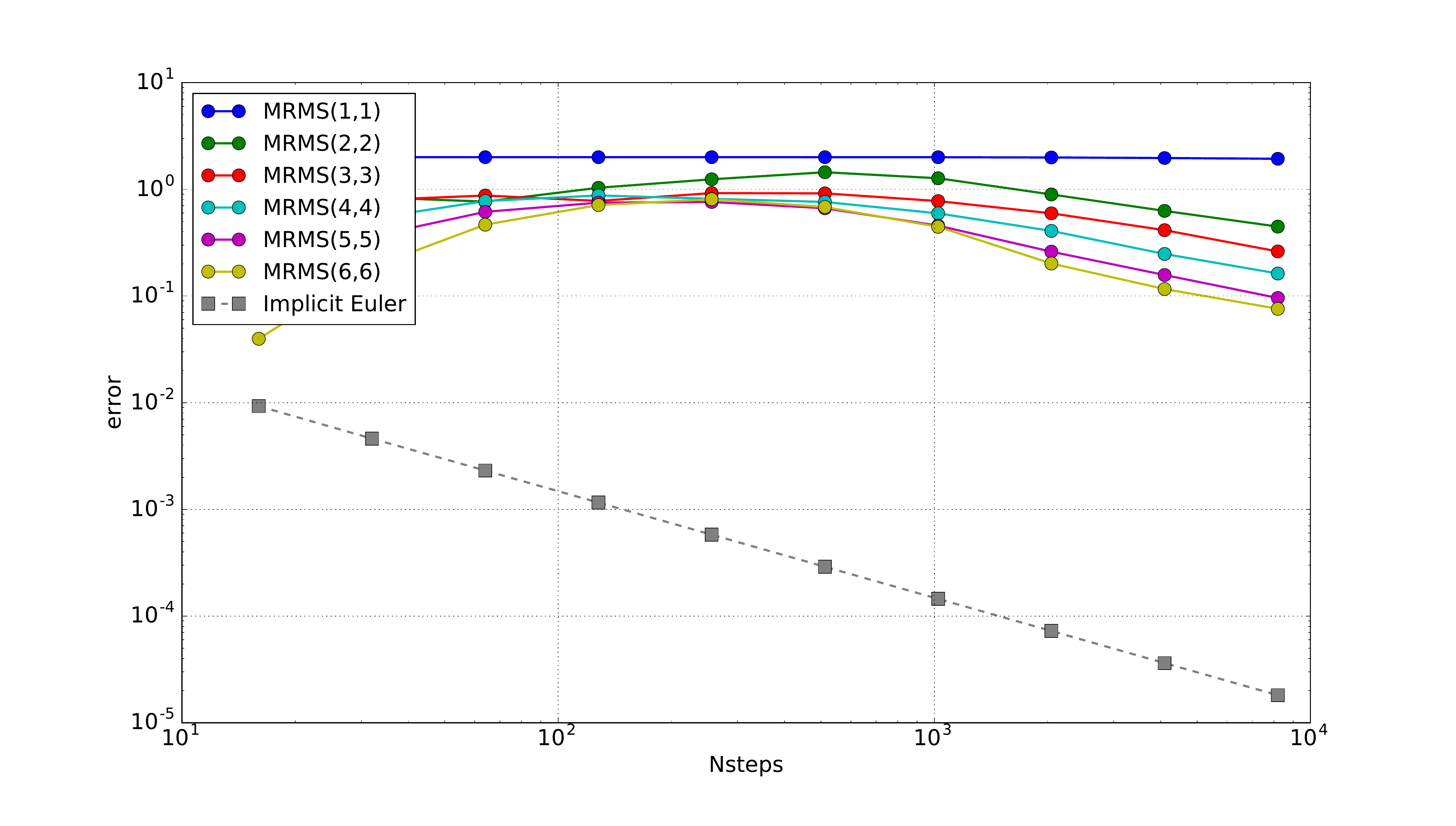} \\
    \includegraphics[width=13cm]{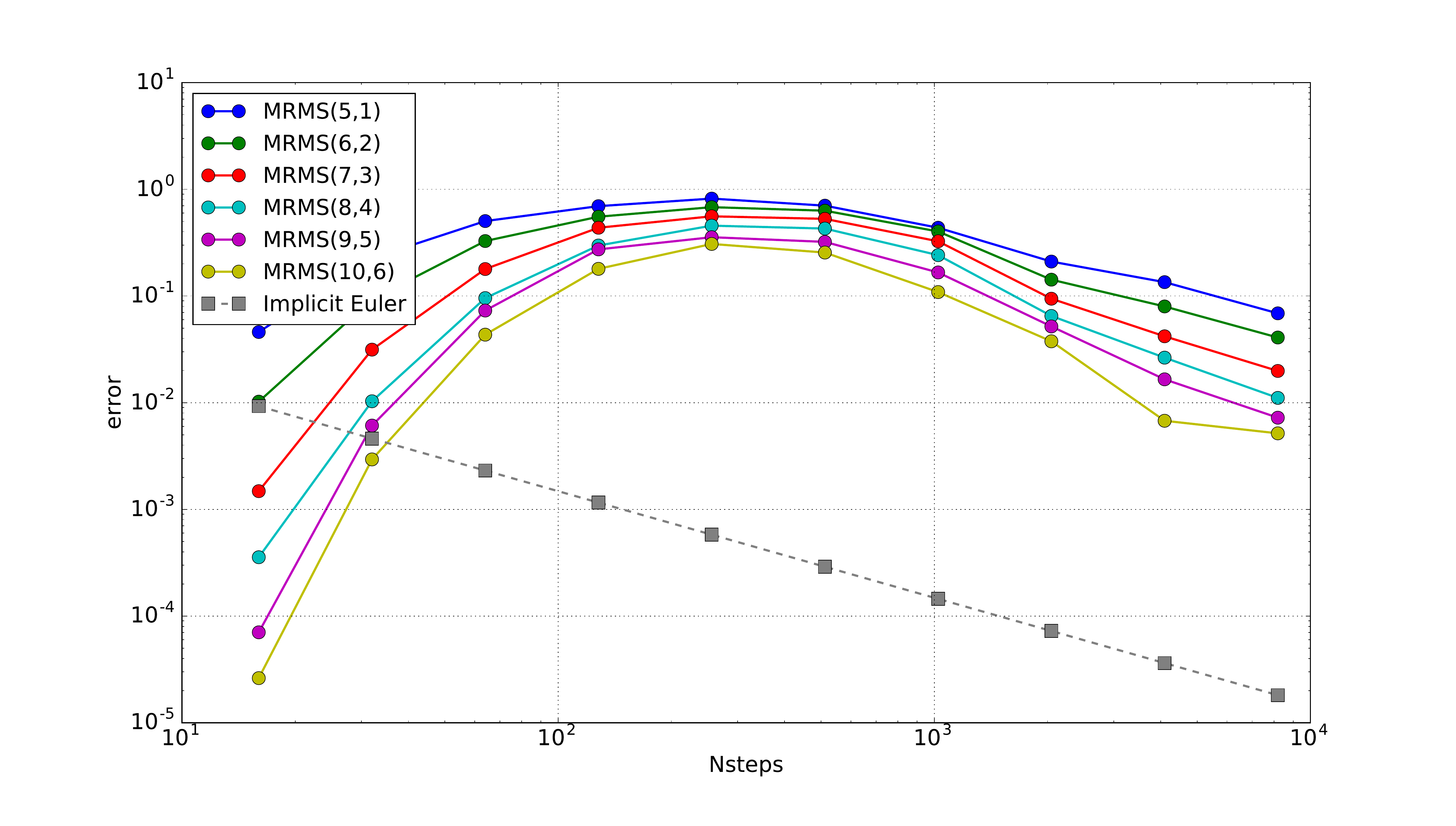} \\
\end{tabular}
\caption{The results of numerical experiment with stiff linear model problem \eqref{eq:model-lin-sys} from subsection \ref{ss:gen-lin-stab}. The eigenvalues of the diagonal matrix of ODE system \eqref{eq:linear-order-test} are $-(10^{m_i})$, where $m_i$ are uniformly distributed on $[-7, 7]$.}
\label{fig:diag-3}
\end{figure}

 Now we are ready to comment on the results of the experiment:
\begin{enumerate}
    \item No instability is observed for $p<7$ as it would happen in case of explicit methods.
    \item The accuracy gets significantly improved as additional vectors are added to the subspace $\mcV$.
    \item The MRMS methods exhibit the non-monotonic error decay as the step size decreases. Taking into account the stability analysis of the MRE method described above we can suppose that the reason is the sensitivity to the number of $z_i$ close to 0, rather than a numerical instability issue.
    \item The accuracy significantly depends on the distribution of the Jacobian matrix eigenvalues. More research is needed to understand this effect better.  
\end{enumerate}

\section{Implementation in the linear case}
\label{sec:implementation}

The most tractable class of problems for MRMS methods are non-autonomous linear systems of the form
\begin{equation}\label{eq:linear-system}
    y'(t)=f(t,y(t))=A(t)y(t)+b(t),
\end{equation}
where $A(t)$ is a $n\times n$ matrix, $b:\RR\to\RR^n$. Then BDF residual function \eqref{eq:r} is
\begin{equation}\label{eq:r-lin}
    r(x)=(\tau A(t_k)-c_k I)x + \tau b(t_k) - c_{k-1} y_{k-1} - \ldots - c_{k-p}y_{k-p},
\end{equation}
the approximate solution is 
\begin{equation*}
    y_k=\sum_{j=0}^{k-1}(\tau \beta_j f_j-\alpha_j y_j) 
\end{equation*}
and the optimization problem is to find
\begin{equation}\label{eq:ls-general}
    (\alpha,\beta)^T=\argmin_{\gamma\in\RR^{2k}}\|W\gamma-g\|,
\end{equation}
where
\begin{equation}\label{eq:ls-Wg}
    W=(\tau A(t_k)-c_k I)V,\quad g=\sum_{j=1}^p c_{k-j} y_{k-j}-\tau b(t_k).
\end{equation}
Here $V$ is a $n\times 2k$ matrix defined by \eqref{eq:Vk}. 

In our experiments we always solve problem \eqref{eq:ls-general} in the 2-norm. If we write the solution of this problem using the pseudo-inverse matrix $W^+=(W^T W)^{-1}W^T$ then the sought approximate solution $y_k$ can be also written as
\begin{equation}
    y_k=V W^+ g. 
\end{equation}

\subsection{Computational complexity}

The most computationally complex stages of one MRMS($k$,$p$) step are the following.
\begin{itemize}
     \item The computing of $A(t_k)$ and $b(t_k)$ --- at most $O(n^2)$ flops in the case of dense matrix.
     \item The computing of matrix $W = \tau A(t_k)V-c_k V$, which approximately amounts to $2k$ multiplications of matrix $A(t_k)$ by vectors $y_0,\ldots,y_{k-1}$, $f_0,\ldots,f_{k-1}$ which constitute $V$. However if $A$ and $\tau$ are constant then the most of columns in matrix $A V$ will be available from the previous steps and only two matrix-vector multiplications are actually required.  
     \item The solution of the linear least squares problem \eqref{eq:ls-general}. This problem can be solved in many ways, but the most widely used methods are the normal equations, the QR decomposition and the singular value decomposition. In the case $n \gg k$, which we are mostly concentrated on, the asymptotic complexity of these algorithms is $O(n k^2)$ flops \cite{demmel97}. Again, if $A$ and $\tau$ are constant, then matrices $W$ on two subsequent steps differ only in two columns. This allows reusing the results of QR decomposition and update it in $O(n k)$ flops. We do not use this opportunity in the forthcoming numerical experiment, though.  
 \end{itemize} 
Therefore, in the dense case the asymptotic complexity of each step is $O(n^2)$, which is to be compared with $O(n^3)$ required by matrix factorization within the standard implicit methods. But it should be realized that LU decompositions in standard implicit solvers are not computed on each step, and if it is already available, then the cost of one `implicit' step is $O(n^2)$ as well. 

\section{Numerical experiment}
\label{sec:experiment}

\subsection{The problem}

For the numerical experiment we consider the two-dimensional heat equation on the unit square:
\begin{gather}
	\frac{\partial u}{\partial t} = \frac{\partial^2 u}{\partial x^2} +  
	\frac{\partial^2 u}{\partial y^2} + f(x, y, t),\quad
	(x, y) \in S = (0,1)\times(0,1),\\
	u(x, y, t)\Bigr|_{(x,y)\in\partial S}=0,\quad u(x, y, 0) = u_0(x,y).
\end{gather}
We solve this problem using the standard method of lines using the five-point stencil for the discretization of Laplacian on the uniform mesh $(x_i, y_j) = (i h, j h)$, $h = 1/(N+1)$:
\begin{equation}\label{eq:heat-mol}
 	w_{ij}'(t)= \frac{1}{h^2}\Bigl(w_{i,j+1} + w_{i,j-1} + w_{i+1,j} + w_{i-1,j} - 4 w_{ij}\Bigr) + b_{ij}(t),
\end{equation}
$i,j = 1,2,\ldots N$, $w_{ij}(t) \approx u(x_i, y_j, t)$, $b_{ij}(t) = f(x_i, y_j, t)$.
In order to exactly compute the error of our methods  we construct a problem with preset solution of the form 
\begin{equation}\label{eq:exact_sol}
	w_{ij}(t)=p(t)q_{ij}
\end{equation}
by defining $b(t)$ as 
\begin{equation*}
	b_{ij}(t)=p'(t)q_{ij} - \frac{p(t)}{h^2}\Bigl(q_{i,j+1} + q_{i,j-1} + q_{i+1,j} + q_{i-1,j} - 4q_{ij}\Bigr).
\end{equation*}
Since the integration is performed with constant step size, for the experiment we took a slowly-changing exact solution with
\begin{equation}\label{eq:pq}
	p(t) = 1 + \cos(t),\quad q_{ij} = \exp(x_i + y_j)\sin(2 \pi x_i)\sin(3\pi y_j).
\end{equation}
The interval of integration is $[0, 10]$, and the initial condition is obviously $w_{ij}(0)=p(0) q_{ij}$. 

\subsection{Implementation details}

In the computational experiment we compare the performance of MRMS($k$, $k$) methods with their classical $k$-step BDF counterparts, $k=1,2,\ldots 5$. For the both methods problem \eqref{eq:heat-mol} is represented in the general form
\begin{equation}\label{eq:wAwb}
	w'(t)=A w(t)+b(t),
\end{equation}
where $A$ is a well-known banded matrix of the two-dimensional discrete Laplacian, $w$ and $b$ are vectors obtained from matrices $(w_{ij})$ and $(b_{ij})$ by stacking their columns atop one another. Matrix $A$ is stored in Compressed Sparse Column format.

The integration with both MRMS and BDF methods is performed with fixed time step, the starting values are taken from the known exact solution \eqref{eq:exact_sol}, \eqref{eq:pq}.

The code for the experiment can be downloaded from the GitHub repository \cite{github}. It is written in Python language (v.~3.5.2) and uses libraries numpy 1.16.4 and scipy 1.3.0. The test were made on a laptop with 1.8 GHz Intel Core i5 processor and 8 Gb of RAM. 

\paragraph{BDF methods}

Since matrix $A$ and the time step $\tau$ are constant, only one LU decomposition of matrix $\tau A - c_k I$ is enough for the whole integration interval. Hence the most time-consuming part of each step is the backsubstitution for the solution of the system
\begin{equation}\label{eq:bdf-backsub}
	(\tau A - c_k I) y_k = c_{k-1} y_{k-1}\ldots + c_{k-p} y_{k-p} - b(t_k).
\end{equation}
The sparse LU factorization is performed using the function scipy.sparse.linalg.splu which uses the SuperLU library described in \cite{SuperLU}.

\paragraph{MRMS methods}

Here the most important stage is the solution of linear least squares problem \eqref{eq:ls-Wg}. In our code it is performed using the function scipy.linalg.lstsq which is a wrapper around the corresponding LAPACK routines. We used the SVD-based method by specifying the option \textsf{lapack\_driver='gelsd'}. 

\subsection{Results}

The numerical tests are performed with grid sizes $N=20$, $N=400$ and $N=1000$.
The integration interval $[0,10]$ was split into $M_s$ equal steps, where $M_s=50\cdot 2^s$, $s=0,1,\ldots,5$, for $N=20$ and $N=400$. To make the computation time shorter for the largest problem with $N=1000$ the number of steps was ten times less: $M_s=5\cdot 2^s$.

At each run the absolute error in the maximum norm at the endpoint ${t=10}$ and the time of execution (in seconds) are measured. The resulting "time-error" diagrams are plotted at Figure~\ref{fig:heat2d}.  

\begin{figure}\label{fig:heat2d}
\begin{tabular}{ccc}
    \includegraphics[width=11cm]{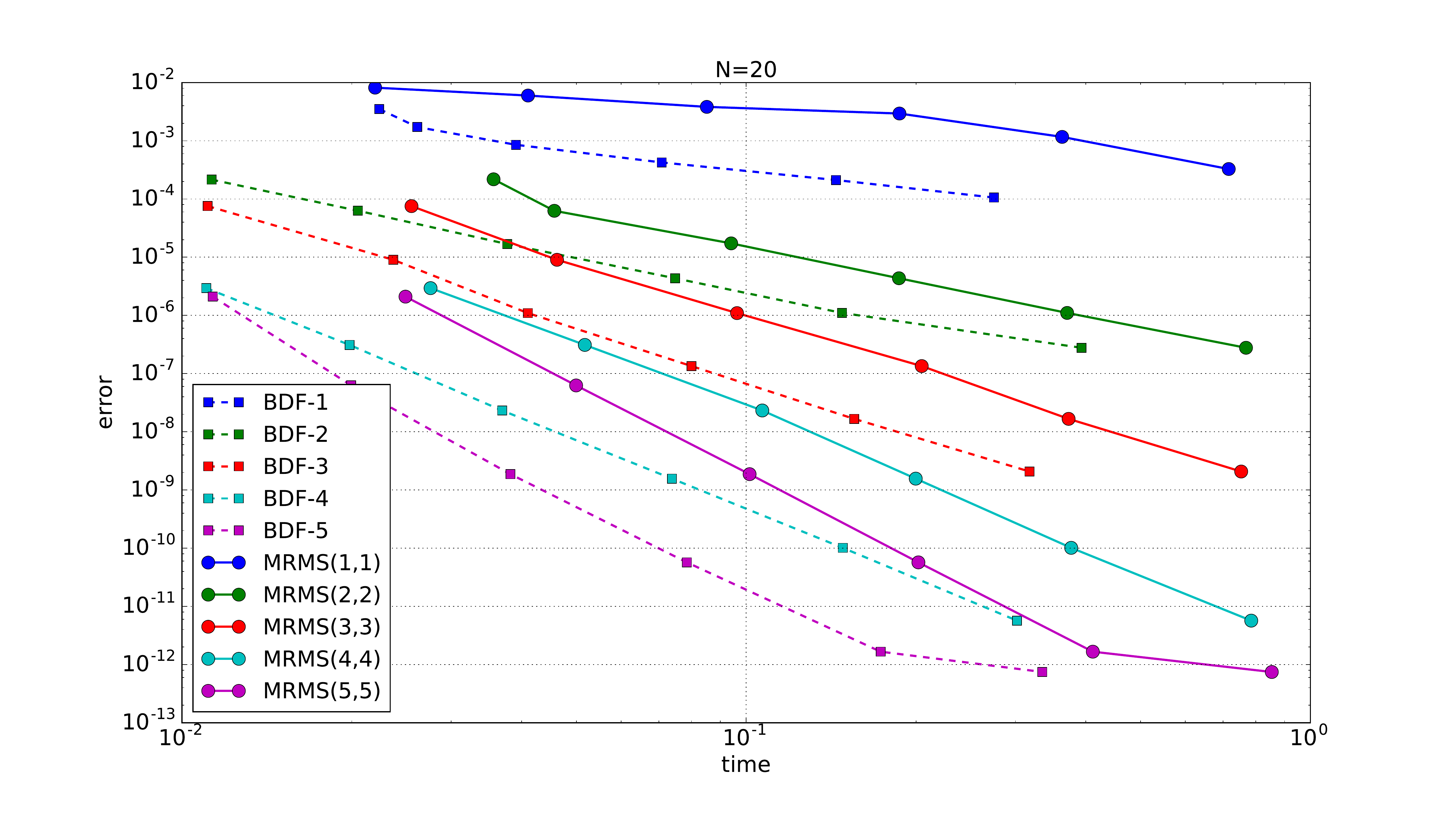} \\
    \includegraphics[width=11cm]{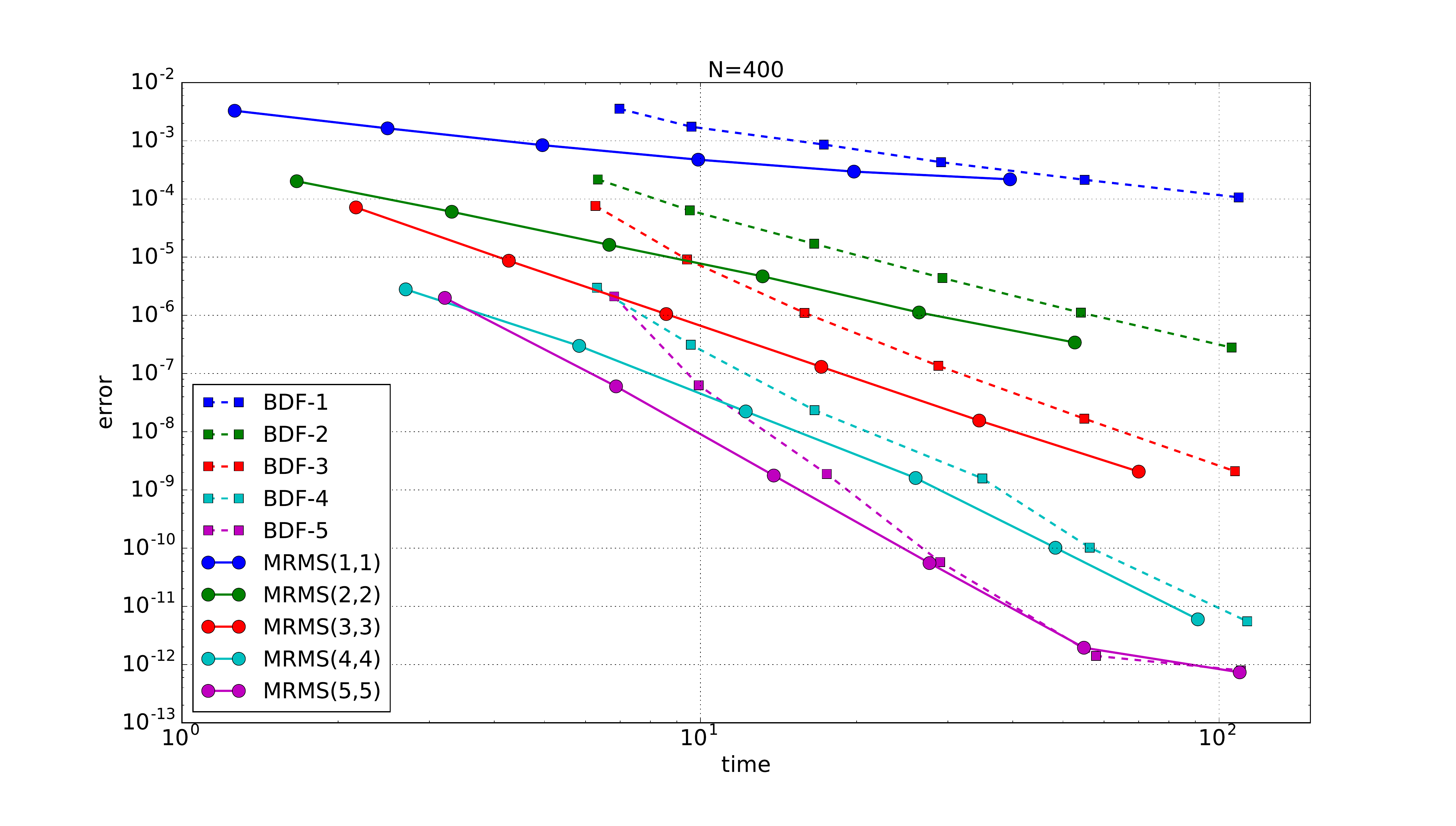} \\
    \includegraphics[width=11cm]{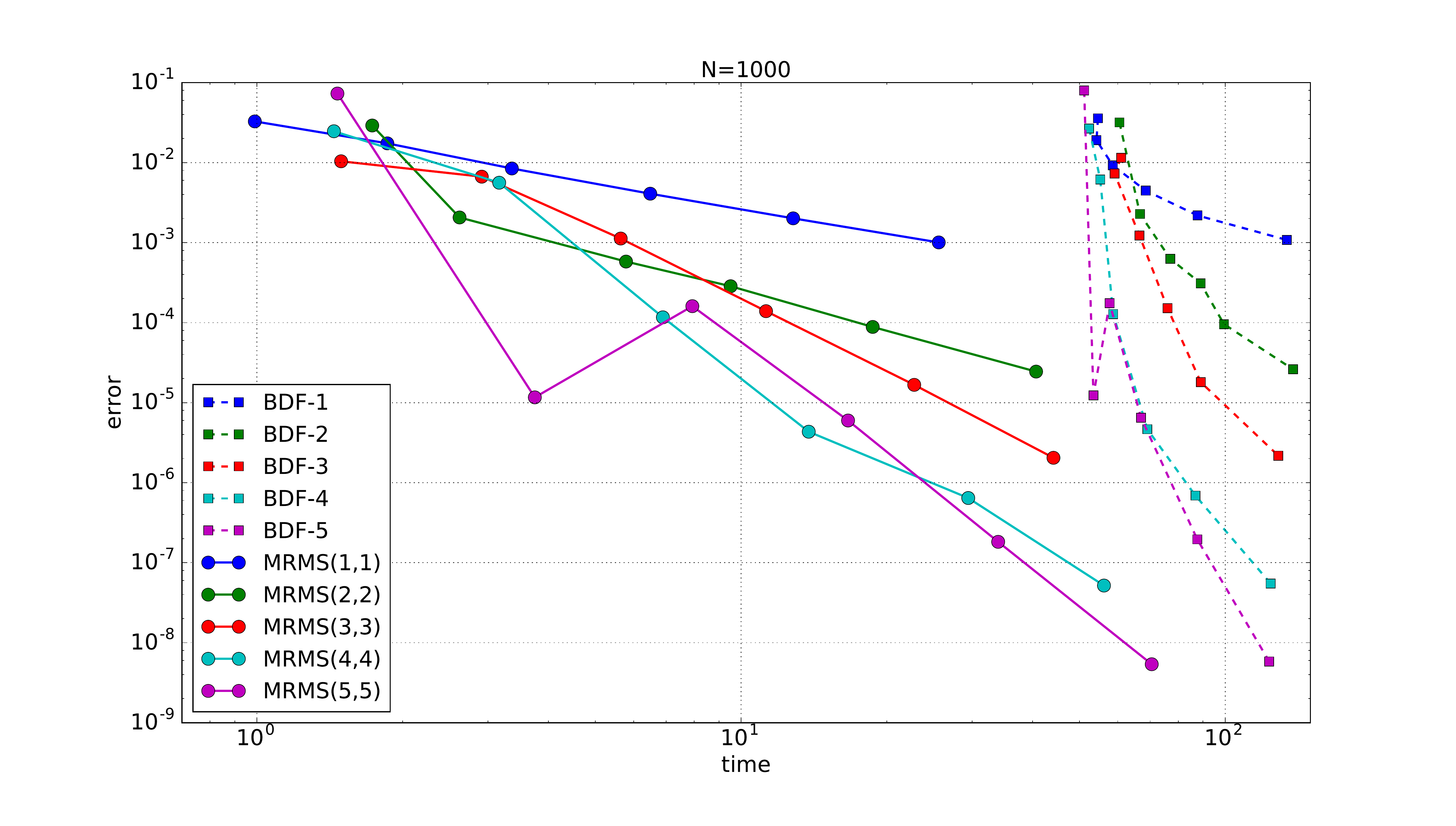} 
\end{tabular}
\caption{The results of numerical experiment with the 2D heat equation \eqref{eq:heat-mol}. The dimension of the ODE systems is $N^2$.}
\end{figure}

\subsection{Discussion}

The main observation regarding Figure \ref{fig:heat2d} is that the relative efficiency of the MRMS methods grows with the increase of dimension. For ${N=20}$ the BDF methods run faster, but already for ${N=400}$ the MRMS methods take a lead and for ${N=1000}$ they are several times faster. 

To give more evidence let's give some statistics for the case ${N=1000}$, ${k=5}$. \emph{The BDF method}: the initial computation of the LU decomposition took 52 seconds, one step took 0.5 seconds which include 0.47 seconds for the backsubstitution to solve \eqref{eq:bdf-backsub}. \emph{The MRMS method}: one step took 0.44 seconds including 0.3 seconds for the solution of the least-squares problem. Of course, all these numbers are platform- and implementation-specific, but the relation between the timings seems to be rather fair.  

Another remark is that on the problem being solved the general-purpose BDF solver with variable step size implemented in the function \textsf{scipy.integrate.solve\_ivp} (with option \textsf{method='BDF'}) for big values of $N$ performs significantly slower than our fixed-step implementation of the BDF method (for $N=400$ the difference is more than 10 times with the accuracy $10^{-6}$). One reason for this is that every time the step size is changed the LU decomposition must be recomputed, and another reason is that the general-purpose code took some additional time to compute the starting values. Needless to say that the explicit Runge--Kutta methods \textsf{RK45} and \textsf{RK23}, which are also available in \textsf{scipy.integrate.solve\_ivp}, are not competetive due to stiffness.

It is also worth mentioning that for $k\ne 1$ the errors of the BDF and MRMS methods are almost equal, which means that for this problem the exact solutions $\ybdf$ are well approximated by the elements of subspaces $\mcV$.

\section*{Conclusion}

In this paper we proposed a new approach to the numerical solution of big stiff linear systems. It is characterized by the reduced computation cost compared to usual implicit methods: instead of solving $n\times n$ linear systems on each step, the proposed numerical scheme requires solution of linear least squares problems with thin $n\times 2k$ matrices ($k \ll n$).

We showed that the MRMS methods inherit the accuracy and zero-stability properties from their BDF counterparts, and performed a partial analysis of linear stability for the one-step analog of implicit Euler method. This analysis showed that this method is applicable in stiff case. For the general case a numerical investigation of linear stability is performed. From the results of this investigation we can conclude that the MRMS methods are stable, but the accuracy in stiff case depends on the distribution of eigenvalues of matrix A: the difficulty of the problem grows as the number of eigenvalues which are close to zero increases. The accuracy can be improved by increasing the number of steps $k$ (whithout changing the order $p$).

In section \ref{sec:experiment} we compared the performance of the MRMS and BDF methods during the fixed step integration of a 2D heat equation on the unit square for different mesh sizes $N$. Though the BDF integration for the whole time interval employed only one sparse LU factorization procedure, for large $N$ even the cost of a backsubstitution exceeded the cost of the linear least squares problem solved on each step of the MRMS method. As a consequence the advantage of the MRMS methods in speed over the BDF methods was clearly seen for $N=400$ and became compelling for $N=1000$ (see Figure \ref{fig:heat2d}).

We would like to conclude with the list of three main topics which seem most important for further research on the MRMS methods:
\begin{enumerate}
    \item \emph{Convergence}. Though the approximation and zero-stability of the MRMS methods are proved in this paper, the rigorous proof of convergence is still missing. It is not clear if these well-known Dahlquist convergence conditions for the linear multistep methods \cite{Dahlquist} are sufficient in the MRMS case.

    \item \emph{Linear stability}. The standard scheme of linear stability investigation can not be directly applied to the MRMS methods, e.~g. it is impossible to construct a stability region in its usual sense for these methods. This is due to the intrinsic nonlinearity and multidimensionality of the methods. It is necessary to consider a set of eigenvalues at once and to reckon with the dependence of the evolutionary operator on the initial conditions. Therefore the rigorous linear stability and/or contractivity analysis of the MRMS methods is much-needed. 

    \item \emph{Nonlinear case}. From the technical point of view, the nonlinear case for the MRMS methods does not look as attractive as the linear one, because the numerical solution of the arising nonlinear optimization problem with standard techniques, such as gradient descent, will require expensive computations involving  Jacobian. Some workaround should be found for the MRMS approach to succeed on a nonlinear problem.

\end{enumerate}

\medskip 

\emph{The author would like to thank the anonymous reviewer for valuable comments and suggestions.}


\bibliographystyle{elsarticle-num}

\bibliography{faleichik-references}

\end{document}